\def\tank#1{\protected@xdef\@thanks{\@thanks
        \protect\footnotetext[0]{#1}}}
\def\bigfoot{

    \@footnotetext}
\newcommand{\ea}{\end{array}}
\newtheorem{theorem}{Theorem}[section]
\newtheorem{lemma}{Lemma}[section]
\newtheorem{definition}{Definition}[section]
\newtheorem{Rem}{Remark}[section]
\newenvironment{proof}{Proof.}
\def \eref#1{\hbox{(\ref{#1})}}
\def\dt{d t}
\begin{document}
\title{{\Large \bf Stochastic 3D
Leray-$\alpha$ Model with Fractional Dissipation}
\footnote{Supported in part by NSFC (No. 11771187, 11571147),  NSF
of Jiangsu Province
 (No. BK20160004), the Qing Lan Project and
PAPD of Jiangsu Higher Education Institutions.}}

\author{{Shihu Li$^a$}, {Wei Liu$^b$}\footnote{Corresponding author: weiliu@jsnu.edu.cn},  {Yingchao Xie$^b$}
\\
 \small  $a.$  School of Mathematical Sciences, Nankai University, Tianjin 300071, China  \\
 \small  $b.$  School of Mathematics and Statistics, Jiangsu Normal University, Xuzhou 221116, China}
\date{}
\maketitle

\begin{center}
\begin{minipage}{145mm}
{\bf Abstract.}   In this paper, we establish the global well-posedness of stochastic 3D Leray-$\alpha$ model with general
fractional dissipation driven by multiplicative noise. This model is the stochastic 3D Navier-Stokes equations
 regularized through a smoothing kernel of order $\theta_1$ in the nonlinear term and a $\theta_2$-fractional Laplacian.
In the case of $\theta_1 \ge 0$ and $\theta_2 > 0$ with $\theta_1+\theta_2 \geq\frac{5}{4}$, we prove the global existence and uniqueness of the strong
solutions. The main results cover many existing works in the
deterministic cases, and also generalize some known results of
stochastic models such as stochastic hyperviscous Navier-Stokes
equations and classical stochastic 3D Leray-$\alpha$ model as our
special cases.


\vspace{3mm} {\bf Keywords:} Stochastic Leray-$\alpha$ model;
fractional Laplacian; Stochastic Navier-Stokes equations;
Well-posedness

\noindent {\bf AMS Subject Classification:} {60H15; 35Q30; 35R11}

\end{minipage}
\end{center}

\section{Introduction}
\setcounter{equation}{0}
 \setcounter{definition}{0}

The 3D Leray-$\alpha$ model of turbulence as a regularization of 3D
Navier-Stokes equations was first introduced by Leray \cite{L1} in
order to prove the existence of solutions to the Navier-Stokes
equation in $\mathbb{R}^3$. It has been studied in the following
form (cf. \cite{CTV,CHOT,Liu1})
\begin{eqnarray}
\left\{
  \begin{aligned}
 &d{u}-[\nu\Delta{u}-(v\cdot{\nabla})u-\nabla{p}]dt=f(u)dt, \\
    &u=v-\alpha^2\Delta{v}, \\
    &\nabla\cdot{u}=0, ~\nabla\cdot{v}=0,
  \end{aligned}
\right.
\end{eqnarray}
where $u$ (and $v$) are unknown fields, $\nu>0$ is the viscosity constant,
 $\alpha>0$  is a length-scale constant, $p$ denotes the pressure and
$f$ is an external force field acting on the fluid.

If $\alpha$ approaches to zero, then  (1.1) is reduced to the
classical 3D Navier-Stokes equations for incompressible fluids. It
is well-known that the uniqueness of global solutions of 3D
Navier-Stokes equations is among the most challenging problems of
contemporary mathematics. Many different types of modifications for
3D Navier-Stokes equations have been investigated  in the
literatures (see e.g. \cite{BM,RZ,T} and the references therein).
One would also like to discover whether there exists a noise
perturbation such that the uniqueness (pathwise or in law) holds for
the stochastic Navier-Stokes equations (cf. \cite{F1}). However, the
problem remains unsolved, in spite of considerable efforts.

In recent years, the fractional power of Laplacian has been paid a
lot of attentions (see e.g. \cite{BF,CSS,CV,CGV,Debbi,LR4,RZZ2,W,YZ,Z}
and the references therein). One can regularize the fluid equation
for the velocity by putting a fractional power of the Laplace
operator. One interesting point is to check for the limiting case
(critical case) whether one still have the global existence and
uniqueness of solutions (see e.g. \cite{BF,CV,W,YZ}) or not? The 3D
Leray-$\alpha$ model with fractional dissipation has been
extensively studied in the following form
\begin{eqnarray}\label{eq}
\left\{
  \begin{aligned}
   & d{u}+[\nu(-\Delta)^{\theta_2}u+(v\cdot{\nabla})u+\nabla{p}]dt=f(u)dt, \\
& u=v+\alpha^{2\theta_1}(-\Delta)^{\theta_1}v ,\\
 &\nabla\cdot{u}=0, ~\nabla\cdot{v}=0, \\
  \end{aligned}
\right. \end{eqnarray} where $(-\Delta)^{\theta_i}$ ($i=1,2$) are
 fractional Laplace operators, the parameter $\theta_1\geq0$ affects the
strength of the non-linear term and $\theta_2\geq0$ represents the
degree of viscous dissipation. The idea of regularizing the equation
by two terms was introduced by Olson and Titi in \cite{OT} for 3D
Lagrangian averaged Navier-Stokes-$\alpha$ model (LANS-$\alpha$
model).
The authors proposed  the idea in \cite{OT} that a weaker
nonlinearity and a stronger viscous dissipation could work together
to yield the well-posedness of the system. In particular, when
$\theta_1=0$, the model \eref{eq} becomes the hyperviscous
Navier-Stokes equations and it is well-known that this system has a
unique global solution for $\theta_2\geq\frac{5}{4}$ (see e.g.
\cite{L2,T,W}). When $\theta_2=1$, Ali  in \cite{A} studied the
global well-posedness of the critical Leray-$\alpha$ model
($\theta_1=\frac{1}{4}$) and also considered the convergence to a
suitable solution of Navier-Stokes equations. Note that the
uniqueness of global weak solutions to the model \eref{eq} for
$\theta_2=1$ and $\theta_1<\frac{1}{4}$ remains open with $L^2$
initial data. Moreover, the Leray-$\alpha$ models with more general
dissipation terms have been studied in \cite{BMR,P,Y}.

In the present paper, we will study the following stochastic 3D
Leray-$\alpha$ model with fractional dissipation on the 3D torus
$\mathbb{T}^3=[0,2\pi]^3$ with periodic boundary conditions:
\begin{eqnarray}
\left\{
 \begin{aligned}
    &d{u}+\left[\nu(-\Delta)^{\theta_2}u+(v\cdot{\nabla})u+\nabla{p}\right]dt=g(u)dW(t) ,   \\
 &u=v+\alpha^{2\theta_1}(-\Delta)^{\theta_1}v ,\\
 &\nabla\cdot{u}=0, ~\nabla\cdot{v}=0,\label{1.3}
  \end{aligned}
\right. \end{eqnarray}
where ${W(t)}$ is a cylindrical Wiener process in a separable Hilbert space. In order to emphasize the
stochastic effects and for the simplicity of exposition, we do not include a deterministic force $f$ in \eref{1.3},
but it's easy to show that all results of this paper could be easily extended to this more general case.

To the best of our knowledge, except for some special cases, there
is no result concerning stochastic 3D Leray-$\alpha$ model with
general fractional dissipation. In the case of $\theta_1=0$, the
stochastic fractional (or hyperviscous) Navier-Stokes equations have
been intensively studied  (see e.g. \cite{Debbi,F,RZZ1,RZZ3,WX} and
references within). Chueshov and Millet in \cite{CM} proved the
well-posedness and large deviation principle of stochastic 3D
Leray-$\alpha$ model in the case of $\theta_1=\theta_2=1$ (see also
\cite{DS}). The well-posedness and irreducibility of 3D
Leray-$\alpha$ model driven by L\'{e}vy noise have been studied in
\cite{BHR,FHR}. The $\alpha$-approximation of stochastic
Leray-$\alpha$ model to the stochastic Navier--Stokes equations was
established in \cite{BR,CK}. In addition,  when the viscosity
constant $\nu=0$, Barbato, Bessaih and Ferrario in \cite{BBF}
studied the 3D stochastic inviscid Leray-$\alpha$ model. The purpose
of this paper is to investigate the global existence and uniqueness
of solutions to the equation (\ref{1.3}) under certain assumptions
on the parameters $\theta_1$ and $\theta_2$. More precisely, we
prove that if $\theta_1\geq0$  and $\theta_2>0$ satisfy
$\theta_1+\theta_2 \geq \frac{5}{4}$, the equation \eref{1.3} has a
unique (probabilistically) strong solution (see Theorems 3.1 and
4.1).

Comparing with the results in the deterministic case,
Yamazaki in \cite{Y} obtained an unique global solution to the 3D
Leray-$\alpha$ model with the logarithmical dissipation when
$\theta_2\geq\frac{1}{2}$ and $\theta_1+\frac{\theta_2}{2} \geq \frac{5}{4}$;  then it was improved by Pennington in
\cite{P} for $\theta_2>\frac{1}{2}$ and $\theta_1+\theta_2\geq
\frac{5}{4}$. Moreover, Barbato, Morandin and Romito in \cite{BMR}
proved the 3D Leray-$\alpha$ model with the logarithmical
dissipation is well-posed for $\theta_1+\theta_2 \geq \frac{5}{4}$
with $\theta_1$ and $\theta_2\geq0$, this result mainly focused on the
optimal condition on the correction to the dissipation and was
obtained by analyzing energy dispersion over dyadic shells, which is
a completely different approach with the one employed in this
work. Another related work is about 3D regularized Boussinesq
equations in \cite{BF}, but their results are  restricted to the
case of $\frac{1}{2}<\theta_2<\frac{5}{4}$ and $\theta_1+\theta_2
=\frac{5}{4}$.

The proof of our main results is divided into two cases. If the
viscous dissipation is strong enough, $i.e.$ for
$\theta_2>\frac{1}{2}$, we can apply the generalized  variational
approach (cf. \cite{LR1,LR3}) to get the well-posedness with initial
data in $\mathbb{H}^0$. The reason is that $(-\Delta)^{\theta_2}$ is
smoothing by $2\theta_2$ derivatives, while the $(v\cdot\nabla) u$
has one derivative. Let $\mathbb{H}^s$ denote the Sobolev space of
divergence free vector fields (see \eref{1.22}). We shall consider
the Geland triple $\mathbb{H}^{\theta_2}\subset \mathbb{H}^0 \subset
\mathbb{H}^{-\theta_{2}},$ and get the existence and uniqueness of
strong solutions when $\theta_1\ge 0$ and $\theta_2 > \frac{1}{2}$
with $\theta_1+\theta_2 \geq \frac{5}{4}$. In this case, we can
check the coefficients satisfy the local monotonicity and coercivity
conditions similarly as the second named author and R\"{o}ckner did
for various types of SPDEs in \cite{LR1} (see also
\cite{Liu2,LR2,LR4}). It is known that the 3D Navier-Stokes
equations ($\theta_2=1,~\theta_1=0$) lie outside the framework in
\cite{LR3} (see Example 5.2.23 there), hence our result illustrates
that if we regularize the 3D Navier-Stokes equations by putting a
fractional power of Laplacian for $\theta_2\geq\frac{5}{4}$, then
the corresponding stochastic hyperviscous Navier-Stokes equations
(i.e. $\theta_2\geq\frac{5}{4}, ~\theta_1=0$) are also included in
the generalized variational framework.

On the other hand, the case $0<\theta_2\leq\frac{1}{2}$ is much more
difficult to handle. Since the dissipation term is not strong enough
to control the non-linear term  $(v\cdot\nabla) u$, the uniqueness
of solutions with initial data in $\mathbb{H}^0$ seems unavailable.
To get the well-posedness for \eref{1.3}, we work in the phase space
$\mathbb{H}^1$ with the initial data in $\mathbb{H}^1$. And we
get the global existence of unique strong solution which is also
continuous with respect to $\mathbb{H}^1$ norm  for the case
$\theta_1\ge 0$ and $\theta_2 > 0$ with $\theta_1+\theta_2\geq\frac{5}{4}$.
Note that the variational approach is not applicable to \eref{1.3}
in this case, so we use a different approach to get the well-posedness.
Firstly, we get the existence of a unique local strong solution when
$\theta_1\ge 0$ and $\theta_2>0$ satisfying $\theta_1+\theta_2
>\frac{3}{4}$ based on the work of \cite{RZZ1}.  Then we prove that the solution is global for
$\theta_1\ge 0$ and $\theta_2> 0$ with $\theta_1+\theta_2 \geq
\frac{5}{4}$. In \cite{GV,RZZ1} the authors show that, when the
noise is linear multiplicative, the local solution is global with a
high probability if the initial data is sufficiently small, or if
the noise coefficient is sufficiently large. Different with the
arguments in \cite{GV,RZZ1}, we consider the general multiplicative
noise including but not limit to the linear case, and show that the
local solution is global almost surely. The proof of global
existence of solution is based on a stochastic version Gronwall's
lemma from \cite{GZ1} and some stopping time techniques. But it is
clear that the proof of the main results is more involved than the
arguments in \cite{GZ1,RZZ1}, as we need to maintain the balance of
the mixed fractional dissipation terms in \eref{1.3}. Another main
difficulty arising is to prove appropriate $\mathbb{H}^s$-norm
estimate with $s>0$. Unlike the case $s=0$, we do not have the
cancellation property
$\int\Lambda^s(v\cdot\nabla{u})\Lambda^s{u}dx=0$ anymore. Inspired
by the works \cite{BF,CGV, RZZ1,RZZ2}, we frequently use the
commutator estimate to show that the $\mathbb{H}^1$-norm can be
controlled. Due to the lack of the cancellation property, we are not
able to show that the solution is in
$L^p(\Omega;L^\infty_{loc}([0,\infty);\mathbb{H}^1))\cap{L}^2(\Omega;L^2_{loc}([0,\infty);\mathbb{H}^{\theta_2+1
}))$ for $\theta_1\ge 0$ and $\theta_2> 0$ with $\theta_1+\theta_2
\geq \frac{5}{4}$. However, we prove that the solution is in
$L^p(\Omega;L^\infty_{loc}([0,\infty);\mathbb{H}^1))\cap{L}^2(\Omega;L^2_{loc}([0,\infty);\mathbb{H}^{\theta_2+1
}))$ in the subcritical case (i.e. $\theta_1\geq 0, \theta_2> 0$,
$\theta_1+\theta_2 > \frac{5}{4}$).

The main results of this paper illustrate how the non-linearity and
viscous dissipation can be balanced with each other to yield the well-posedness of
 stochastic 3D Leray-$\alpha$ model. In particular, if we take $\theta_1= 0$ and $\theta_2=1$ in \eref{1.3}, the corresponding
stochastic 3D Navier-Stokes equations lie in the region of the local
well-posedness and outside the region of the global well-posedness
of our main results. Moreover, our global well-posedness results are
applicable for the case
 $\theta_2=1$,
$\theta_1=\frac{1}{4}$ and the case $\theta_1=0$,
$\theta_2\geq\frac{5}{4}$, which are corresponding to the the
stochastic critical Leray-$\alpha$ model (see \cite{A} for
deterministic case) and hyperviscous Navier-Stokes equations. Hence
our results cover and generalize some corresponding results in
\cite{A,Debbi,DS,F,GZ1,RZZ1}. And we believe that the methods
presented in this paper are also useful for tackling other types of
SPDEs with fractional Laplacian.

We should mention that there also exist many works concerning other
types of SPDE with fractional Laplacian such as stochastic
fractional Burgers equation, stochastic quasi-geostrophic equation,
stochastic fractional Euler equations, stochastic fractional
reaction-diffusion equation and stochastic fractional Boussinesq
equations (see e.g. \cite{BD,CGS,CGV,Debbi,GZ2,HSL,LR4,RZZ1,RZZ2}
and the references therein).

 The paper
is organized as follows. In Section 2 we introduce some notations
and preliminaries. In Section 3 we prove the well-posedness for
equation \eref{1.3} with initial data in $\mathbb{H}^0$ when
$\theta_1\ge 0,\theta_2
> \frac{1}{2}$ and $\theta_1+\theta_2 \geq
\frac{5}{4}$. In Section 4, we establish the well-posedness for
equation \eref{1.3} with initial data in $\mathbb{H}^1$ when
$\theta_1\ge 0, \theta_2>0$ and $\theta_1+\theta_2 \geq
\frac{5}{4}$. At the end of this Section, we also show that there
exist the finite moments of $\mathbb{H}^1$ norm of the solution at
any given deterministic time $t$ in the subcritical case (i.e.
$\theta_1\geq 0, \theta_2> 0$, $\theta_1+\theta_2 > \frac{5}{4}$).

\section{Preliminaries}
\setcounter{equation}{0}
 \setcounter{definition}{0}

In this section, we introduce some notations and  preliminaries which are commonly used in the analysis of fluid equations.

We denote by $L^p=L^p(\mathbb{T}^3)^3$  the usual Lebesgue space
over $\mathbb{T}^3$ with the norm $\|\cdot\|_{L^p}$. As usual in the
periodic setting, we can restrict ourself to deal with initial data
with vanishing spatial average; then the solutions will enjoy the
same property at any fixed time $t>0$.

Since we work with periodic boundary condition, we can expand the
velocity in Fourier series as
$$u(x)=\sum_{k\in\mathbb{Z}_0^3}\hat{u}_ke^{ik\cdot{x}},~~\mbox{with}~\hat{u}_k\in\mathbb{C}^3,~\hat{u}_{-k}=\hat{u}^*_k~~\text{for every}~k,$$
 where
$\mathbb{Z}^3_0=\mathbb{Z}^3\setminus\{0\}$ and $\hat{u}^*_k$ denotes the complex conjugate of $\hat{u}_k$.

For $s\in\mathbb{R}$,
the Sobolev spaces $H^s(\mathbb{T}^3)^3$ can be represented as
$$H^s=\left\{u(x)=\sum_{k\in\mathbb{Z}_0^3}\hat{u}_ke^{ik\cdot{x}}:\hat{u}_{-k}=\hat{u}^*_k,\|u\|_s^2<\infty\right\},$$
where
$$\|u\|_s^2=\sum_{k\in\mathbb{Z}_0^3}|k|^{2s}|\hat{u}_k|^2.$$
In the Fourier space, the divergence free condition can be formulated as
$$\hat{u}_k\cdot{k}=0~~\mbox{for every}~k.$$
Define the divergence free Sobolev space by
\begin{eqnarray}
\mathbb{H}^s:=\big\{ u\in{H}^s:\hat{u}_k\cdot{k}=0 ~~\mbox{for
every}~k \big\},\label{1.22}
\end{eqnarray} which is a Hilbert space with scalar product
$$\langle{u},v\rangle_{\mathbb{H}^s}=\sum_{k\in\mathbb{Z}_0^3}|k|^{2s}\hat{u}_k\cdot{\hat{v}}_{-k}.$$

Following the standard notation, we denote the norm in space
$\mathbb{H}^0$ by $\|u\|_{L^2}$ and inner product
$\langle{u},v\rangle=\sum_k \hat{u}_k\cdot\hat{v}_{-k} $. Thus,
$\mathbb{H}^0$ is the Hilbert space of  $L^2$-integrable functions
on $\mathbb{T}^3$ taking values in $\mathbb{R}^3$ which are
divergence free and have zero mean.  For simplicity, we also
identify the continuous dual space of $\mathbb{H}^s$ as
$\mathbb{H}^{-s}$ with the dual action of $\mathbb{H}^{-s}$ on
$\mathbb{H}^s$ by the same notation $\langle{u},v\rangle=\sum_k
\hat{u}_k\cdot\hat{v}_{-k}$. In particular, we denote the norm of
$\mathbb{H}^1$ by $\|u\|$.

 The nonlocal operator
$\Lambda^{s}$ is defined as
$$\Lambda^s u:=\sum_{k\in\mathbb{Z}_0^3}|k|^{s}\hat{u}_ke^{ik\cdot{x}},$$
hence $\Lambda^2=-\Delta$ . Note that $\Lambda^s$ maps $H^r$ onto
$H^{r-s}$ and
$$\|u\|_s^2=\sum_{k\in\mathbb{Z}_0^3}|k|^{2s}|\hat{u}_k|^2=\|\Lambda^s{u}\|_{L^2}^2.$$

Denote by $P_\sigma$ the Leray-Helmholtz projection from $H^\beta$
to $\mathbb{H}^\beta$. It's well-known that the operators $P_\sigma$
and $\Lambda^s$ are commutative. Note that the space
$H^{s+\epsilon}$ is compactly embedded in $H^{s}$ (resp.
$\mathbb{H}^{s+\epsilon}$ is compactly embedded in $\mathbb{H}^{s}$)
for any $\epsilon>0$, moreover we have the following Sobolev
embedding theorem (see e.g. \cite{BF,ES}).
\begin{lemma}If $0\leq{}s<\frac{3}{2}$ and
$\frac{1}{p}+\frac{s}{3}=\frac{1}{q}$, then $H^s\subset{L}^p$.
Moreover, there is a constant $C=C(s,p)>0$ such that
$$\|f\|_{L^p}\leq C\|\Lambda^s{f}\|_{L^q}.$$
If $s=\frac{3}{2}$, then for any finite $p$,
$$\|f\|_{L^p}\leq C\|f\|_{s}$$
and if $s>\frac{3}{2}$, then
$$\|f\|_{L^\infty}\leq C\|f\|_{s}.$$
\end{lemma}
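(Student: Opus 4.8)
\emph{Plan.} The assertion is the Sobolev embedding theorem on the torus $\mathbb{T}^3$, and I would organize the proof around the three regimes $s<\tfrac32$, $s=\tfrac32$ and $s>\tfrac32$. Only the first regime carries analytic content; the other two follow as short corollaries. Throughout one uses that every function in play has zero spatial mean, so that $\Lambda^{-s}$ is well defined on the relevant Fourier side and $|k|\ge 1$ for all frequencies $k\in\mathbb{Z}_0^3$.

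For the first and main claim, the idea is to realise the inequality as a Hardy--Littlewood--Sobolev (HLS) estimate for a Riesz-type potential. Setting $g=\Lambda^s f$, we have $f=\Lambda^{-s}g$, and on zero-mean periodic functions $\Lambda^{-s}$ is convolution against the kernel $G_s(x)=\sum_{k\in\mathbb{Z}_0^3}|k|^{-s}e^{ik\cdot x}$. The key step is to show that, for $0<s<3$, this periodic kernel has exactly the local singularity of the Euclidean Riesz kernel: by Poisson summation one writes $G_s(x)=c_s|x|^{s-3}+R(x)$ on $\mathbb{T}^3$, where $R$ is bounded (indeed smooth). Splitting the convolution accordingly, I would apply the classical HLS inequality on $\mathbb{R}^3$ to the singular part $c_s|x|^{s-3}*g$, which gives $\|\,|x|^{s-3}*g\|_{L^p}\le C\|g\|_{L^q}$ in the admissible range $1<q<p<\infty$ with $\tfrac1p=\tfrac1q-\tfrac s3$, and Young's inequality to the smooth remainder $R*g$. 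Adding the two estimates yields $\|f\|_{L^p}\le C\|\Lambda^s f\|_{L^q}$; the degenerate case $s=0$ is trivial since then $p=q$.

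The two endpoint regimes reduce to the first. For $s=\tfrac32$ and any finite $p$, I would pick $s'=3(\tfrac12-\tfrac1p)<\tfrac32$, apply the first part with $q=2$, and use the monotonicity $\|f\|_{s'}\le\|f\|_{3/2}$ (valid because $|k|\ge1$) to conclude
\[
\|f\|_{L^p}\le C\|\Lambda^{s'}f\|_{L^2}=C\|f\|_{s'}\le C\|f\|_{3/2}.
\]
For $s>\tfrac32$ the embedding into $L^\infty$ is immediate from the absolute convergence of the Fourier series: by Cauchy--Schwarz,
\[
\|f\|_{L^\infty}\le\sum_{k\in\mathbb{Z}_0^3}|\hat f_k|\le\Big(\sum_{k\in\mathbb{Z}_0^3}|k|^{-2s}\Big)^{1/2}\Big(\sum_{k\in\mathbb{Z}_0^3}|k|^{2s}|\hat f_k|^2\Big)^{1/2}=C_s\|f\|_s,
\]
and $C_s^2=\sum_{k}|k|^{-2s}<\infty$ precisely when $2s>3$.

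I expect the only real obstacle to be the kernel estimate $G_s(x)=c_s|x|^{s-3}+R(x)$ and the transfer of the HLS inequality from $\mathbb{R}^3$ to $\mathbb{T}^3$; once the local Riesz singularity of $G_s$ is isolated, the rest is a routine combination of HLS and Young. In practice this is exactly the content imported from the cited sources \cite{BF,ES}, so one could alternatively invoke their periodic Sobolev inequalities directly and spend the effort only on recording the endpoint cases $s=\tfrac32$ and $s>\tfrac32$.
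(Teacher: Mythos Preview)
The paper does not prove this lemma at all; it merely states it as the Sobolev embedding theorem and refers the reader to \cite{BF,ES}. Your proposal therefore goes well beyond what the paper offers by supplying an actual argument. The route you sketch---Hardy--Littlewood--Sobolev via the periodic Riesz kernel $G_s$ for the subcritical range, reduction by monotonicity of the norms $\|\cdot\|_s$ (using $|k|\ge 1$) for the critical exponent, and Cauchy--Schwarz on the Fourier side for the supercritical range---is correct and is one of the standard proofs of these embeddings on $\mathbb{T}^3$. Your own caveat is accurate: the only genuinely nontrivial step is the Poisson-summation decomposition $G_s(x)=c_s|x|^{s-3}+R(x)$ with $R$ bounded, after which the transfer from $\mathbb{R}^3$ to $\mathbb{T}^3$ is routine. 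Since the paper itself outsources the proof entirely, there is nothing to compare against beyond noting that you have filled in what the paper leaves to citation.
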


\vspace{3mm}
Define the bilinear operator
$B:\mathbb{H}^1\times{\mathbb{H}^1}\rightarrow{\mathbb{H}}^{-1}$ by
$$\langle{B}(u,v),w\rangle:=\int_{\mathbb{T}^3}\left((u\cdot\nabla){v}\right)\cdot{w}dx,$$
i.e. $B(u,v)=P_\sigma\left((u\cdot\nabla){v}\right)$ for smooth
vectors $u$ and $v$.

We list some well-known properties of the bilinear operator $B$
below (see e.g. \cite{RT,BF}).
\begin{lemma}
For any $u,v,w\in{\mathbb{H}^1}$, we have
\begin{eqnarray}
\langle{B}(u,v),w\rangle=-\langle{B}(u,w),v\rangle,~~\langle{B}(u,v),v\rangle=0.\label{2.1}
\end{eqnarray}
\eref{2.1} holds more generally for any $u,v,w$ giving a meaning to
the trilinear forms, as stated precisely in the following:
\begin{eqnarray}
\langle{B}(u,v),w\rangle\leq{C}\|u\|_{m_1}\|v\|_{m_2+1}\|w\|_{m_3},\label{2.2}
\end{eqnarray}
with the nonnegative parameters fulfilling
$$m_1+m_2+m_3\geq\frac{3}{2}~~~~~\text{if}~m_i\neq\frac{3}{2}~\text{for~any~i}$$
or$$m_1+m_2+m_3>\frac{3}{2}~~~~~\text{if}~m_i=\frac{3}{2}~\text{for~some~i}.$$
\end{lemma}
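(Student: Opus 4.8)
The plan is to treat the two assertions separately: first the algebraic identities \eref{2.1} for smooth fields, then the analytic bound \eref{2.2}, after which the extension of \eref{2.1} to general $u,v,w$ follows by density.

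For \eref{2.1}, I would start from the definition written in coordinates, $\langle B(u,v),w\rangle=\int_{\mathbb{T}^3}\sum_{i,j}u_i(\partial_i v_j)w_j\,\dx$, for smooth divergence-free $u,v,w$, and integrate by parts in the variable $x_i$. Since $\mathbb{T}^3$ has no boundary the boundary terms vanish, leaving $-\int_{\mathbb{T}^3}\sum_{i,j}\partial_i(u_i w_j)v_j\,\dx$. Expanding the derivative produces one term carrying the factor $\nabla\cdot u=\sum_i\partial_i u_i$, which is zero by incompressibility, and one term equal to $-\langle B(u,w),v\rangle$; this gives the antisymmetry. Setting $w=v$ then forces $\langle B(u,v),v\rangle=-\langle B(u,v),v\rangle$, so it vanishes. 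For general fields the identities pass to the limit by approximating in norm and invoking the continuity of $B$ supplied by \eref{2.2} (e.g.\ with $m_1=m_3=1$, $m_2=0$ one gets $|\langle B(u,v),w\rangle|\leq C\|u\|_1\|v\|_1\|w\|_1$, so the trilinear form is continuous wherever the relevant norms are finite).

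For the estimate \eref{2.2}, the main tools are H\"older's inequality and the Sobolev embeddings of Lemma 2.1. I would bound
$$|\langle B(u,v),w\rangle|\leq\int_{\mathbb{T}^3}|u|\,|\nabla v|\,|w|\,\dx\leq\|u\|_{L^{r_1}}\|\nabla v\|_{L^{r_2}}\|w\|_{L^{r_3}},$$
with exponents satisfying $\frac{1}{r_1}+\frac{1}{r_2}+\frac{1}{r_3}=1$. Since $\nabla v$ carries one fewer derivative than $v$, it suffices to have $\|u\|_{L^{r_1}}\leq C\|u\|_{m_1}$, $\|\nabla v\|_{L^{r_2}}\leq C\|v\|_{m_2+1}$ and $\|w\|_{L^{r_3}}\leq C\|w\|_{m_3}$, i.e.\ the embeddings $H^{m_i}\subset L^{r_i}$. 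On the torus (with zero spatial average) this holds whenever $\frac{1}{r_i}\geq\frac{1}{2}-\frac{m_i}{3}$ for $m_i<\frac{3}{2}$, for any finite $r_i$ when $m_i=\frac{3}{2}$, and for all $r_i\leq\infty$ when $m_i>\frac{3}{2}$.

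The crux is the bookkeeping of exponents, and this is the step I expect to require the most care. Summing the three admissibility conditions $\frac{1}{r_i}\geq\frac{1}{2}-\frac{m_i}{3}$ against the H\"older constraint $\sum_i\frac{1}{r_i}=1$ gives exactly $1\geq\frac{3}{2}-\frac{m_1+m_2+m_3}{3}$, that is $m_1+m_2+m_3\geq\frac{3}{2}$, so admissible exponents $r_i\in[1,\infty]$ exist precisely under the stated condition (note the lower bounds $\frac{1}{2}-\frac{m_i}{3}$ are all $<1$ for $m_i\geq 0$, so no exponent is forced below $1$). The delicate borderline is when some $m_i=\frac{3}{2}$: there the critical exponent is infinite but $H^{3/2}$ does not embed into $L^\infty$, so one must pick a \emph{finite} $r_i$, i.e.\ keep $\frac{1}{r_i}>0$ with a little budget to spare, which is possible exactly when the sum is strict, $m_1+m_2+m_3>\frac{3}{2}$. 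This accounts for the dichotomy in the hypotheses. Once the exponents are fixed, collecting the three Sobolev bounds yields \eref{2.2} with a constant depending only on $(m_1,m_2,m_3)$.
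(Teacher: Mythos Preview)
Your argument is correct and is precisely the standard one: integration by parts using the divergence-free condition for \eref{2.1}, and H\"older plus Sobolev embeddings with the exponent bookkeeping for \eref{2.2}, including the borderline analysis at $m_i=\tfrac{3}{2}$. The paper does not actually supply a proof of this lemma; it simply quotes it as well known and refers to \cite{RT,BF}, where exactly the approach you describe is carried out. One minor remark: when you write that the embeddings hold ``whenever $\tfrac{1}{r_i}\geq\tfrac{1}{2}-\tfrac{m_i}{3}$'', you might also note that on the torus $L^{p}\subset L^{q}$ for $p\geq q$, which is why you need not separately enforce $r_i\geq 2$; this is implicit in your argument but worth stating once.
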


Put $G=(I+\alpha^{2\theta_1}\Lambda^{2\theta_1})^{-1}$, then $G$ is
a linear operator. For $u=v+\alpha^{2\theta_1}\Lambda^{2\theta_1}v$,
we have $v=Gu$. Denote $B(u):=B(Gu,u)$.  By applying $P_\sigma$ to
Eq.~\eref{1.3} we remove the pressure term  and reformulate it as
the following abstract stochastic evolution equation:
\begin{eqnarray}
\left\{ \begin{aligned}
&d{u(t)}+\nu{\Lambda}^{2\theta_2}u(t)dt+B(u(t))dt=g(u(t))dW(t),\\
&u(0)=u_0,
\end{aligned} \right.\label{spde}
\end{eqnarray} where $W(t)$ is a cylindrical Wiener process
in a separable Hilbert space $U$ w.r.t. a complete filtered
probability space $(\Omega,\mathscr{F}, \{\mathscr{F}_t\}_{t\geq0}, \mathbb{P})$.

The regularization effect of the nonlocal operator involved in the
relation between $Gu$ and $u$ is described by the following lemma
(see \cite[Lemma 2.2]{A}).

\begin{lemma}Let $0\leq\beta\leq2\theta_1$, $s\in{\mathbb{R}}$ and $u\in{H^s}$,
then $Gu\in{H^{s+\beta}}$ and there exists a constant
$C=C_{\alpha,\beta}>0$ such that
$$\|Gu\|_{s+\beta}\leq{C}\|u\|_s.$$
\end{lemma}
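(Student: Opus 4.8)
The plan is to treat $G$ as a Fourier multiplier operator and reduce the claim to the uniform boundedness of a scalar symbol. Since $\Lambda^{2\theta_1}$ acts on Fourier modes by $\widehat{(\Lambda^{2\theta_1}u)}_k=|k|^{2\theta_1}\hat u_k$, the operator $I+\alpha^{2\theta_1}\Lambda^{2\theta_1}$ has symbol $1+\alpha^{2\theta_1}|k|^{2\theta_1}$, which is strictly positive for every $k\in\mathbb{Z}_0^3$; hence $G$ is well defined and acts by
$$\widehat{(Gu)}_k=\frac{1}{1+\alpha^{2\theta_1}|k|^{2\theta_1}}\,\hat u_k.$$
First I would write out the target norm using the definition $\|Gu\|_{s+\beta}^2=\sum_{k\in\mathbb{Z}_0^3}|k|^{2(s+\beta)}|\widehat{(Gu)}_k|^2$, so that, comparing termwise with $\|u\|_s^2=\sum_{k}|k|^{2s}|\hat u_k|^2$, it suffices to establish a uniform bound
$$m(k):=\frac{|k|^{2\beta}}{\left(1+\alpha^{2\theta_1}|k|^{2\theta_1}\right)^2}\le C_{\alpha,\beta}\qquad\text{for all }k\in\mathbb{Z}_0^3.$$

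The core estimate is then purely elementary. Assuming $\theta_1>0$ (the case $\theta_1=0$ forces $\beta=0$ and $G=I$, which is trivial), I would substitute $x=\alpha^{2\theta_1}|k|^{2\theta_1}\ge0$, so that $|k|^{2\beta}=\alpha^{-2\beta}x^{\beta/\theta_1}$ and
$$m(k)=\alpha^{-2\beta}\,\frac{x^{\beta/\theta_1}}{(1+x)^2}.$$
Because $0\le\beta\le2\theta_1$ gives $0\le\beta/\theta_1\le2$, the chain of inequalities $x^{\beta/\theta_1}\le(1+x)^{\beta/\theta_1}\le(1+x)^2$ holds for every $x\ge0$ (the first step since $x\le1+x$ and $t\mapsto t^{\beta/\theta_1}$ is nondecreasing, the second since $1+x\ge1$ and the exponent is at most $2$), whence $m(k)\le\alpha^{-2\beta}=:C_{\alpha,\beta}$. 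Summing the termwise inequality $|k|^{2(s+\beta)}|\widehat{(Gu)}_k|^2\le C_{\alpha,\beta}\,|k|^{2s}|\hat u_k|^2$ over $k$ yields $\|Gu\|_{s+\beta}^2\le C_{\alpha,\beta}\|u\|_s^2$, so $Gu\in H^{s+\beta}$ with the asserted bound. Since $G$ acts diagonally on Fourier modes it commutes with $P_\sigma$ and preserves the divergence-free and zero-mean conditions, so the same argument applies verbatim on $\mathbb{H}^s$.

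I do not expect a genuine obstacle here: the only point that actually requires care is that the range $0\le\beta\le2\theta_1$ is exactly what keeps the symbol $m(k)$ bounded as $|k|\to\infty$, since for $\beta>2\theta_1$ one has $m(k)\sim\alpha^{-4\theta_1}|k|^{2\beta-4\theta_1}\to\infty$ and the gain of regularity fails. Thus the constraint on $\beta$ is not a technical artifact but the sharp threshold governing how many derivatives the smoothing kernel $G$ can gain, and I would make sure the write-up isolates this as the one quantitative input.
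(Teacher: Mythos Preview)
Your argument is correct and complete. The paper does not actually prove this lemma; it simply cites \cite[Lemma~2.2]{A}, so there is no ``paper's approach'' to compare against beyond noting that your Fourier-multiplier computation is the standard (and essentially the only natural) way to see the estimate in the periodic setting.

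One minor slip: when $\theta_1=0$ you write $G=I$, but in fact $\Lambda^{0}=I$ on the zero-mean space and $\alpha^{0}=1$, so $G=(I+I)^{-1}=\tfrac12 I$. This is harmless for the conclusion (the case $\beta=0$ is still trivial with constant $C=\tfrac12$), but you should correct the parenthetical.
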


Define the commutator
$$[\Lambda^s,f]g=\Lambda^s(fg)-f\Lambda^sg.$$
The following commutator estimate is very important for later use
(see \cite{BF,KP}).
\begin{lemma}(Commutator estimate) Suppose that $s>0$, $p,p_2,p_3\in(1,\infty)$ and $p_1,p_4\in(1,\infty]$ satisfy
\begin{eqnarray}
\frac{1}{p}\geq\frac{1}{p_1}+\frac{1}{p_2},~~~\frac{1}{p}\geq\frac{1}{p_3}+\frac{1}{p_4}.\label{2.5}
\end{eqnarray}Then we have
$$\|[\Lambda^s,f]g\|_{L^p}\leq{C}\left(\|\nabla{f}\|_{L^{p_1}}\|\Lambda^{s-1}{g}\|_{L^{p_2}}+\|\Lambda^s{f}\|_{L^{p_3}}\|g\|_{L^{p_4}}\right).$$
\end{lemma}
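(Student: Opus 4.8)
The plan is to prove this via a Littlewood-Paley/paraproduct decomposition, which makes transparent the origin of the two terms on the right-hand side. Write $\Delta_j$ for the dyadic Littlewood-Paley projector onto frequencies $|k|\sim2^j$ and $S_j=\sum_{j'<j}\Delta_{j'}$ for the associated low-frequency cutoff, and recall Bony's decomposition of a product into paraproducts plus a remainder, $fg=T_fg+T_gf+R(f,g)$, where $T_fg=\sum_j S_{j-1}f\,\Delta_jg$ is the low-high paraproduct, $T_gf$ the high-low one, and $R(f,g)=\sum_{|j-j'|\le1}\Delta_jf\,\Delta_{j'}g$ the high-high remainder. Applying the same decomposition to $f\,\Lambda^sg$ and subtracting, the commutator $[\Lambda^s,f]g$ splits into a low-high commutator piece $\sum_j\bigl(\Lambda^s(S_{j-1}f\,\Delta_jg)-S_{j-1}f\,\Lambda^s\Delta_jg\bigr)$ together with high-low and high-high pieces in which $\Lambda^s$ effectively falls on the high-frequency factor $f$.

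For the low-high piece, which produces the first term $\|\nabla f\|_{L^{p_1}}\|\Lambda^{s-1}g\|_{L^{p_2}}$, the crucial point is a gain of one derivative. Since $\Delta_jg$ is frequency-localized in an annulus $|k|\sim2^j$ while $S_{j-1}f$ lives at strictly lower frequency, $\Lambda^s$ applied to the product acts essentially on $g$; the difference is governed by the finite difference of the symbol $|k|^s$, and a first-order Taylor expansion (equivalently, the mean value theorem applied to the multiplier) produces a factor proportional to $\nabla f$ together with $\Lambda^{s-1}$ acting on $g$. Concretely, representing the $j$-th summand as a convolution against a kernel with controlled first moment, I would establish the blockwise bound $\|\Lambda^s(S_{j-1}f\,\Delta_jg)-S_{j-1}f\,\Lambda^s\Delta_jg\|_{L^p}\lesssim\|\nabla S_{j-1}f\|_{L^{p_1}}\|\Lambda^{s-1}\Delta_jg\|_{L^{p_2}}$ and then sum over $j$.

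For the high-low and high-high pieces the frequency of the product is comparable to that of $f$, so $\Lambda^s$ may be transferred onto $f$ at the cost of Mikhlin-type multiplier bounds, yielding the second term $\|\Lambda^sf\|_{L^{p_3}}\|g\|_{L^{p_4}}$. In all cases the summation over dyadic blocks is carried out using H\"older's inequality together with the square-function characterization of the $L^p$ norm and Bernstein's inequality. The slack in the exponent conditions $\frac1p\ge\frac1{p_1}+\frac1{p_2}$ and $\frac1p\ge\frac1{p_3}+\frac1{p_4}$ is absorbed via the embedding $L^r\hookrightarrow L^p$, which holds on the finite-measure torus $\mathbb{T}^3$ whenever $r\ge p$; the endpoint cases $p_1=\infty$ or $p_4=\infty$ are treated by replacing the corresponding factor with an $L^\infty$ estimate.

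The main obstacle I anticipate is the rigorous justification of the one-derivative gain in the low-high piece and the subsequent reassembly of the dyadic sum. One must bound the commutator multiplier uniformly across scales, which is where the Coifman-Meyer bilinear multiplier theorem (or an explicit kernel estimate exploiting the vanishing-moment structure) is needed, and then recombine the blocks without loss in $L^p$, forcing careful use of almost orthogonality rather than the triangle inequality alone. Once these multiplier bounds are in hand, the high-low and high-high contributions are comparatively routine.
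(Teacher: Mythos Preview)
Your proposal outlines the standard Littlewood--Paley/paraproduct proof of the Kato--Ponce commutator estimate, and the sketch is essentially correct: Bony's decomposition isolates the low--high piece where the mean-value/Taylor argument on the symbol $|k|^s$ yields the one-derivative gain $\|\nabla f\|_{L^{p_1}}\|\Lambda^{s-1}g\|_{L^{p_2}}$, while the high--low and remainder pieces place $\Lambda^s$ on $f$ and give $\|\Lambda^s f\|_{L^{p_3}}\|g\|_{L^{p_4}}$. The obstacles you flag (uniform bilinear-multiplier bounds for the commutator block and almost-orthogonal reassembly in $L^p$) are exactly the technical heart of the matter, and are resolved by the Coifman--Meyer theorem or its variants as you indicate; the use of the finite-measure embedding $L^r\hookrightarrow L^p$ on $\mathbb{T}^3$ to absorb the slack in the H\"older exponents is also correct.

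The paper itself does not prove this lemma at all: it is stated with the remark ``see \cite{BF,KP}'' and used as a black box. The original Kato--Ponce paper \cite{KP} treats the $\mathbb{R}^n$ case with equality in the H\"older exponents via a direct symbol-class (Coifman--Meyer) argument rather than an explicit paraproduct decomposition, and \cite{BF} quotes the periodic version with the inequalities in the exponents. So your proposal is a genuine proof where the paper offers only a citation; what you gain is a self-contained argument that makes the two-term structure transparent, at the cost of having to carry out the bilinear-multiplier and square-function details that the paper simply outsources to the literature.
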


We recall the following important product estimate (see, e.g.
\cite{BF,RZZ2}).
\begin{lemma} Suppose that $s>0$,
$p,p_2,p_3\in(1,\infty)$, $p_1,p_4\in(1,\infty]$ satisfy \eref{2.5}.
We have
$$\|\Lambda^s(fg)\|_{L^p}\leq{C}\left(\|{f}\|_{L^{p_1}}\|\Lambda^{s}{g}\|_{L^{p_2}}+\|\Lambda^s{f}\|_{L^{p_3}}\|g\|_{L^{p_4}}\right).$$
\end{lemma}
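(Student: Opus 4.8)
The plan is to establish this fractional Leibniz (Kato--Ponce type) estimate by a Littlewood--Paley / Bony paraproduct decomposition, which is the standard harmonic-analytic route. First I would fix a dyadic partition of frequency space on $\mathbb{T}^3$: let $\Delta_j$ denote the Littlewood--Paley blocks localizing at frequencies $|k|\sim 2^j$, and let $S_j=\sum_{j'<j}\Delta_{j'}$ be the low-frequency truncations. Two standard facts drive everything. The Bernstein inequalities say that on a function spectrally supported in $\{|k|\sim 2^j\}$ the operator $\Lambda^s$ acts like multiplication by $2^{sj}$; and the square-function characterization gives $\|\Lambda^s h\|_{L^q}\sim\left\|\big(\sum_j 2^{2sj}|\Delta_j h|^2\big)^{1/2}\right\|_{L^q}$ for $1<q<\infty$. (Since functions on the torus are taken mean-zero, the zero mode plays no role.)

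With these tools in hand I would use Bony's decomposition
$$fg=T_fg+T_gf+R(f,g),$$
where $T_fg=\sum_j S_{j-1}f\,\Delta_j g$, $T_gf=\sum_j S_{j-1}g\,\Delta_j f$ and $R(f,g)=\sum_{|j-k|\le1}\Delta_j f\,\Delta_k g$, and bound $\Lambda^s$ applied to each piece separately. For the paraproduct $T_fg$, each summand $S_{j-1}f\,\Delta_j g$ is spectrally localized near $|k|\sim 2^j$, so $\Lambda^s$ contributes the weight $2^{sj}$; combining the square-function bound with H\"older's inequality for the pair $(p_1,p_2)$ and the uniform bound $\|S_{j-1}f\|_{L^{p_1}}\lesssim\|f\|_{L^{p_1}}$ yields exactly the term $\|f\|_{L^{p_1}}\|\Lambda^s g\|_{L^{p_2}}$. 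The symmetric computation for $T_gf$ produces $\|\Lambda^s f\|_{L^{p_3}}\|g\|_{L^{p_4}}$.

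The delicate piece, and the step I expect to be the main obstacle, is the high-high remainder $R(f,g)$: here the product of two comparable high-frequency pieces can resonate down to arbitrarily low output frequency, so there is no automatic derivative gain and one cannot place $\Lambda^s$ on a distinguished factor. This is precisely where the hypothesis $s>0$ is used. Writing $\Lambda^s\Delta_q R(f,g)$ and noting that $\Delta_q$ only sees the contributions with $2^j\gtrsim 2^q$, I would estimate $2^{sq}\lesssim 2^{sj}$ and sum the tail $\sum_{q\lesssim j}2^{s(q-j)}$, which converges exactly because $s>0$; distributing the surviving weight $2^{sj}$ onto $\Delta_j f$ (or, symmetrically, onto $\Delta_k g$) and applying H\"older with $(p_3,p_4)$ bounds this term by $\|\Lambda^s f\|_{L^{p_3}}\|g\|_{L^{p_4}}$. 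Finally, to pass from the H\"older endpoint to the inequalities \eref{2.5}, I would invoke the embedding $L^q(\mathbb{T}^3)\hookrightarrow L^p(\mathbb{T}^3)$ for $q\ge p$, valid since $\mathbb{T}^3$ has finite measure, which absorbs the extra integrability. Summing the three contributions gives the claim; as the estimate is classical, one may alternatively just cite \cite{BF,RZZ2}.
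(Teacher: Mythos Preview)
The paper does not actually prove this lemma: it is stated as a known product estimate and simply attributed to \cite{BF,RZZ2}, with no argument given. Your Littlewood--Paley/Bony paraproduct sketch is the standard harmonic-analytic proof of such Kato--Ponce type inequalities and is correct in outline; the identification of the remainder $R(f,g)$ as the only nontrivial piece, and of the hypothesis $s>0$ as precisely what makes the high--high tail $\sum_{q\lesssim j}2^{s(q-j)}$ summable, is exactly right. Your closing remark---that one may alternatively just cite \cite{BF,RZZ2}---is in fact all the paper does, so in that sense your proposal strictly contains the paper's treatment.
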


For any Hilbert space $K$, we use
($L_2(U,K)$,$\|\cdot\|_{L_2(U,K)}$) to denote
 the space of all Hilbert-Schmidt
operators from $U$ to $K$. In this paper we use $C$ to denote some generic constant which may
change from line to line.

\section{Main results for $\theta_2>\frac{1}{2}$ with initial data in $\mathbb{H}^0$}
\setcounter{equation}{0}
 \setcounter{definition}{0}
In this section, we will prove the existence and uniqueness of
Eq.~\eref{spde} with initial data in $\mathbb{H}^0$ when
$\theta_2>\frac{1}{2}$. To this end, we first impose the following
assumptions on $g$.

\medskip
\noindent\textbf{{Hypothesis} (3.1)}  Suppose that  $g$ is measurable mapping from $\mathbb{H}^{\theta_2}$ to
${L_2(U,\mathbb{H}^0)}$ and satisfies the following conditions:
\begin{enumerate}[(i)]
  \item  There exists a constant $C>0$ such that, for any $u\in\mathbb{H}^{\theta_2}$,
$$\|g(u)\|_{L_2(U,\mathbb{H}^0)}^2\leq C(1+\|u\|_{L^2}^2).$$

  \item There exists a constant $C>0$ such that, for any $u,v \in\mathbb{H}^{\theta_2}$,
$$ \|g(u)-g(v)\|_{L_2(U,\mathbb{H}^0)}^2\leq \nu \|u-v\|_{\theta_2}^2
+ C(1+\rho(v))\|u-v\|_{L^2}^2,$$
where $\rho:\mathbb{H}^{\theta_2}\to[0,+\infty)$ is a
measurable and locally bounded function in $\mathbb{H}^{\theta_2}$
such that
$$\rho(v)\leq C(1+\|v\|_{\theta_2}^2)(1+\|v\|_{L^2}^2).$$
\end{enumerate}

 Next lemma plays an essential role and the proof can be found from \cite[Lemma 4.2]{OT}.

\begin{lemma}\label{N.R.}
 Suppose that  $\theta_1\ge 0$ and $\theta_2 > \frac{1}{2}$ with $\theta_1+\theta_2 \geq \frac{5}{4}$, then
$B:{\mathbb{H}}^{2\theta_1+\theta_2}\times\mathbb{H}^{\theta_2}\rightarrow\mathbb{H}^{-\theta_2}$
is well defined. Moreover, for $u\in{\mathbb{H}}^{2\theta_1+\theta_2}$ and $v,w\in{\mathbb{H}}^{\theta_2}$,
$$|\langle B(u,v),w\rangle|\leq C \left(\|u\|_{2\theta_1+\theta_2}\|w\|_{L^2}+\|u\|_{2\theta_1}\|w\|_{\theta_2}\right)\|v\|_{\theta_2}.$$
\end{lemma}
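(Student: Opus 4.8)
The plan is to reduce the trilinear bound to a purely bilinear product estimate and then to resolve it by distributing the fractional derivative between $u$ and $w$. First I would use the cancellation relation \eref{2.1} to rewrite $\langle B(u,v),w\rangle=-\langle B(u,w),v\rangle=-\int_{\mathbb{T}^3}\big((u\cdot\nabla)w\big)\cdot v\,dx$, so that $v$ enters \emph{without} a derivative; this is exactly what makes the factor $\|v\|_{\theta_2}$, rather than $\|v\|_{\theta_2+1}$, appear on the right. Using the duality between $\mathbb{H}^{-\theta_2}$ and $\mathbb{H}^{\theta_2}$ together with the boundedness of $P_\sigma$, this gives $|\langle B(u,v),w\rangle|\le C\,\|(u\cdot\nabla)w\|_{-\theta_2}\,\|v\|_{\theta_2}$. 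Since $\nabla\cdot u=0$ one may write $(u\cdot\nabla)w=\nabla\cdot(u\otimes w)$, and because $\nabla\cdot$ maps $H^{1-\theta_2}$ into $H^{-\theta_2}$ the problem reduces to the product estimate $\|u\otimes w\|_{1-\theta_2}\le C\big(\|u\|_{2\theta_1+\theta_2}\|w\|_{L^2}+\|u\|_{2\theta_1}\|w\|_{\theta_2}\big)$. Here the hypothesis $\theta_2>\tfrac12$ is essential: it guarantees $1-\theta_2<\theta_2$, which is precisely what is needed for the pairing to be admissible and for $B$ to be well defined as a map into $\mathbb{H}^{-\theta_2}$.

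Next I would attack the product estimate with the fractional Leibniz rule of Lemma 2.5 with $s=1-\theta_2$ (the range $\theta_2\ge1$ is easier, since then $1-\theta_2\le0$ and no positive derivative has to be moved, so I focus on $\tfrac12<\theta_2<1$). Writing componentwise $\|\Lambda^{1-\theta_2}(u_jw_i)\|_{L^2}\le C\big(\|u\|_{L^{p_1}}\|\Lambda^{1-\theta_2}w\|_{L^{p_2}}+\|\Lambda^{1-\theta_2}u\|_{L^{p_3}}\|w\|_{L^{p_4}}\big)$, the two summands are meant to produce the two terms of the claim. For the first summand I would choose, via the Sobolev embedding of Lemma 2.1, $\tfrac1{p_1}=\tfrac12-\tfrac{2\theta_1}3$ so that $\|u\|_{L^{p_1}}\le C\|u\|_{2\theta_1}$, and $\tfrac1{p_2}=\tfrac12-\tfrac{2\theta_2-1}3$ so that $\|\Lambda^{1-\theta_2}w\|_{L^{p_2}}\le C\|w\|_{\theta_2}$ (again $\theta_2>\tfrac12$ keeps the order $2\theta_2-1$ nonnegative). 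The admissibility condition $\tfrac1{p_1}+\tfrac1{p_2}\le\tfrac12$ then reads $1-\tfrac{2\theta_1+2\theta_2-1}3\le\tfrac12$, i.e. exactly $\theta_1+\theta_2\ge\tfrac54$; this yields the term $\|u\|_{2\theta_1}\|w\|_{\theta_2}$. For the second summand I would place $w$ in $L^2$ (so $p_4=2$) and try to control $\Lambda^{1-\theta_2}u$ by $\|u\|_{2\theta_1+\theta_2}$, formally giving the term $\|u\|_{2\theta_1+\theta_2}\|w\|_{L^2}$.

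I expect the genuine obstacle to be precisely this second, high-$u$/low-$w$, term at the critical threshold $\theta_1+\theta_2=\tfrac54$. There the Hölder budget $\tfrac1{p_3}+\tfrac1{p_4}\le\tfrac12$ with $p_4=2$ forces the differentiated factor $\Lambda^{1-\theta_2}u$ into $L^\infty$ (equivalently $p_3=\infty$), which is the endpoint just outside the range of Lemma 2.5; equivalently, the single-term Sobolev product law $H^{2\theta_1}\cdot H^{\theta_2}\to H^{1-\theta_2}$ is borderline and fails exactly when $\theta_1+\theta_2=\tfrac54$. To close this I would pass to a Littlewood--Paley (Bony) decomposition of $u\otimes w$: the low-$u$/high-$w$ and resonant interactions reproduce $\|u\|_{2\theta_1}\|w\|_{\theta_2}$, while for the high-$u$/low-$w$ interaction I would use the embedding $H^{2\theta_1+2\theta_2-1}\hookrightarrow L^\infty$ (legitimate since $2\theta_1+2\theta_2-1\ge\tfrac32$) on the frequency-localized pieces of $\Lambda^{1-\theta_2}u$ against $w$ in $L^2$, the dyadic series converging precisely because the regularity surplus is nonnegative iff $\theta_1+\theta_2\ge\tfrac54$. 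In the subcritical range $\theta_1+\theta_2>\tfrac54$ the single-term version of the product law already suffices, so all the delicacy is concentrated at the endpoint; this is consistent with the statement being attributed to \cite[Lemma 4.2]{OT}.
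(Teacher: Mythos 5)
First, a point of reference: the paper does not prove this lemma at all --- it is quoted directly from \cite[Lemma 4.2]{OT} --- so there is no in-paper argument to match line by line. Your reduction (antisymmetry to move the derivative off $v$, the divergence form $\nabla\cdot(u\otimes w)$, and a bilinear estimate for $\|u\otimes w\|_{1-\theta_2}$) is sound and is in the same spirit as Olson--Titi's splitting of the Fourier convolution sum. The real issue is that the ``genuine obstacle'' you locate at $\theta_1+\theta_2=\frac54$ is self-inflicted. Since the right-hand side of the lemma is a \emph{sum}, it suffices to prove the single bound $\|u\otimes w\|_{1-\theta_2}\le C\|u\|_{2\theta_1}\|w\|_{\theta_2}$; you are under no obligation to manufacture the term $\|u\|_{2\theta_1+\theta_2}\|w\|_{L^2}$ out of the second Leibniz summand by forcing $p_4=2$. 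For $\frac12<\theta_2<1$ (and, say, $2\theta_1<\frac32$, $2\theta_1+\theta_2<\frac52$; the remaining ranges are easier) take in Lemma 2.5 the exponents $\frac1{p_3}=\frac12-\frac{2\theta_1+\theta_2-1}{3}$ and $\frac1{p_4}=\frac12-\frac{\theta_2}{3}$, so that by Lemma 2.1 one has $\|\Lambda^{1-\theta_2}u\|_{L^{p_3}}\le C\|u\|_{2\theta_1}$ and $\|w\|_{L^{p_4}}\le C\|w\|_{\theta_2}$ with $p_3,p_4$ \emph{finite}, while $\frac1{p_3}+\frac1{p_4}=1-\frac{2\theta_1+2\theta_2-1}{3}\le\frac12$ is once more exactly $\theta_1+\theta_2\ge\frac54$. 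Together with your (correct) treatment of the first summand, the estimate closes at the critical threshold using only Lemmas 2.1 and 2.5 and H\"older; the Bony/Littlewood--Paley apparatus, while it would also work (the paraproduct blocks are frequency-localized, so the $\ell^2$ sums close at zero regularity surplus), is machinery built to defeat a non-existent enemy. Relatedly, the assertion that the product law $H^{2\theta_1}\cdot H^{\theta_2}\to H^{2\theta_1+\theta_2-3/2}$ ``fails'' at $\theta_1+\theta_2=\frac54$ is incorrect for $2\theta_1,\theta_2<\frac32$; what fails is only the one-line H\"older--Sobolev derivation with $w$ placed in $L^2$.

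The second, genuine, gap is that the case $\theta_2\ge1$ is dismissed without an argument, and ``no positive derivative has to be moved'' is not by itself a proof: on the critical line with $\theta_2\in[1,\frac54]$ one has $2\theta_1+\theta_2\le\frac32$, so $H^{2\theta_1+\theta_2}\not\hookrightarrow L^\infty$ and the naive bound $\|u\otimes w\|_{1-\theta_2}\le\|u\|_{L^\infty}\|w\|_{L^2}$ is unavailable --- note that this range contains $\theta_1=\frac14$, $\theta_2=1$, the critical Leray-$\alpha$ case the paper highlights. The case does go through: for $1-\theta_2\le0$ one uses duality, $\|u\otimes w\|_{1-\theta_2}\le\sup_{\|\phi\|_{\theta_2-1}\le1}\|u\|_{L^{q_1}}\|w\|_{L^{q_2}}\|\phi\|_{L^{q_3}}$ with $\frac1{q_1}=\frac12-\frac{2\theta_1}{3}$, $\frac1{q_2}=\frac12-\frac{\theta_2}{3}$, $\frac1{q_3}=\frac12-\frac{\theta_2-1}{3}$, the H\"older budget $\sum\frac1{q_i}\le1$ again reducing to $\theta_1+\theta_2\ge\frac54$ --- but this half page must be written, since all three Sobolev embeddings saturate simultaneously on the critical line. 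With these two repairs the proof is complete and essentially equivalent to the cited Olson--Titi argument.
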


\begin{definition}
A continuous adapted $\mathbb{H}^0$-valued process
$\{u(t)\}_{t\in[0,T]}$ is called a (strong) solution of \eref{spde},
if for its $dt\otimes\mathbb{P}$-equivalent class $\bar{u}$, we have
$\bar{u} \in L^2([0,T]\times\Omega;\mathbb{H}^{\theta_2})$ and
 $\mathbb{P}$-$a.s.$,
$$u(t)+\nu\int_0^t\Lambda^{2\theta_2}\bar{u}(s)ds+\int_0^tB(\bar{u}(s))ds=u_0+\int_0^t g(\bar{u}(s))dW(s), \ \ t\in[0,T].$$
\end{definition}

The first main result of this paper is given in the next statement.

\begin{theorem}\label{Th1}
Suppose that $\theta_1\ge 0$, $\theta_2 > \frac{1}{2}$ with
$\theta_1+\theta_2 \geq \frac{5}{4}$ and the Hypothesis (3.1) hold.
We have that, for any $u_0~{\in}~L^p(\Omega;\mathbb{H}^0)$ with
$p\geq4$, \eref{spde} has a unique strong solution
$\{u(t)\}_{t\in[0,T]}$, which satisfies
$$\mathbb{E}\left(\sup_{t\in[0,T]}\|u(t)\|_{L^2}^p+\int_0^T\|u(t)\|_{\theta_2}^2dt\right)<\infty.$$
Moreover, the solution $\{u(t)\}_{t\in[0,T]}$ is a Markov process.
\end{theorem}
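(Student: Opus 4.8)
The plan is to establish well-posedness via the generalized variational framework of Liu--R\"ockner, so the core of the proof is to verify that the coefficients of \eref{spde} satisfy the local monotonicity and coercivity conditions on the Gelfand triple $\mathbb{H}^{\theta_2}\subset\mathbb{H}^0\subset\mathbb{H}^{-\theta_2}$. First I would fix the drift operator $A(u):=-\nu\Lambda^{2\theta_2}u-B(u)$ and note that $\langle\Lambda^{2\theta_2}u,u\rangle=\|u\|_{\theta_2}^2$ handles the linear dissipation. The nonlinear term is the delicate part: using Lemma \ref{N.R.} with $v=u$ (and recalling $B(u)=B(Gu,u)$ together with the smoothing estimate $\|Gu\|_{2\theta_1+\theta_2}\le C\|u\|_{\theta_2}$ from the $G$-regularization lemma), I would bound $|\langle B(u),w\rangle|$ so that the $\|w\|_{\theta_2}$ factor can be absorbed by Young's inequality into a small multiple of $\nu\|w\|_{\theta_2}^2$, with the remaining factors controlled by $\|w\|_{L^2}$ times the locally bounded quantity. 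This is precisely what yields \emph{local} (rather than global) monotonicity, with the monotonicity defect dominated by a function $\rho$ of the $\mathbb{H}^{\theta_2}$- and $L^2$-norms, matching Hypothesis (3.1)(ii).

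The key computation for local monotonicity is to estimate $\langle B(u)-B(v),u-v\rangle$. Writing $B(u)-B(v)=B(Gu,u)-B(Gv,v)$ and splitting as $B(G(u-v),u)+B(Gv,u-v)$, the second piece vanishes against $u-v$ by the cancellation $\langle B(a,b),b\rangle=0$ from \eref{2.1}, so only $\langle B(G(u-v),u),u-v\rangle$ survives. I would estimate this using \eref{2.2} (or Lemma \ref{N.R.}) and interpolation, distributing derivatives so that the top-order factor is again a $\|\cdot\|_{\theta_2}$-norm absorbable into the dissipation, leaving a lower-order factor times $\|u-v\|_{L^2}^2$ with coefficient $C(1+\rho(u)+\rho(v))$; the condition $\theta_1+\theta_2\ge\frac54$ is exactly what makes the Sobolev exponents close here. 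Combined with Hypothesis (3.1)(ii) on $g$, this gives the required local monotonicity inequality, while Hypothesis (3.1)(i) together with the coercivity estimate above yields coercivity and growth. Hemicontinuity is routine from the bilinearity of $B$ and continuity of $G$.

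Once these structural conditions are checked, existence and uniqueness of a strong solution follow directly from the abstract main theorem of the variational approach (cf. \cite{LR1,LR3}), including the a priori bound $\mathbb{E}\big(\sup_{t\in[0,T]}\|u(t)\|_{L^2}^p+\int_0^T\|u(t)\|_{\theta_2}^2\,dt\big)<\infty$ for $p\ge4$, which comes from applying It\^o's formula to $\|u(t)\|_{L^2}^p$, using coercivity, the linear growth of $g$, and the Burkholder--Davis--Gundy inequality followed by a Gronwall argument. The Markov property is then a standard consequence of the uniqueness of solutions together with the time-homogeneity of the coefficients, via the Yamada--Watanabe-type argument available in the variational framework.

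The main obstacle I anticipate is the nonlinear estimate: verifying that the derivative loss in $B(u)=B(Gu,u)$ can be fully compensated. The term $(v\cdot\nabla)u$ costs one derivative, and $G$ gains $2\theta_1$ derivatives on the transport field while $\Lambda^{2\theta_2}$ supplies $2\theta_2$ of smoothing, so the whole scheme only closes when the derivative budget balances---this is where the hypothesis $\theta_1+\theta_2\ge\frac54$ (and $\theta_2>\frac12$, so that $\mathbb{H}^{\theta_2}$ genuinely embeds beyond $\mathbb{H}^0$) is forced, and getting the Sobolev exponents in Lemmas \ref{N.R.} and the product/commutator estimates to line up exactly at the endpoint is the technically demanding step.
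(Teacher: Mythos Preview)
Your proposal is correct and follows essentially the same route as the paper: set up the Gelfand triple $\mathbb{H}^{\theta_2}\subset\mathbb{H}^0\subset\mathbb{H}^{-\theta_2}$, check hemicontinuity, coercivity (via $\langle B(Gu,u),u\rangle=0$), growth, and local monotonicity for $F(u)=-\nu\Lambda^{2\theta_2}u-B(u)$, then invoke the abstract well-posedness theorem in \cite{LR3}. The only cosmetic difference is that the paper uses the alternative bilinear splitting $B(u_1)-B(u_2)=B(Gu_1,u_1-u_2)+B(G(u_1-u_2),u_2)$, so the surviving term is $\langle B(G(u_1-u_2),u_2),u_1-u_2\rangle$ and the monotonicity defect depends on $\|u_2\|_{\theta_2}^2$ rather than $\|u_1\|_{\theta_2}^2$; the estimate via Lemma~\ref{N.R.} and Young's inequality is identical either way.
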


\begin{proof}
Now we consider the following Geland triple
$$\mathbb{H}^{\theta_2}\subset \mathbb{H}^0 \subset
\mathbb{H}^{-\theta_{2}}.$$ We first note that the following
mappings
$$\Lambda^{2\theta_2}:\mathbb{H}^{\theta_2}\rightarrow \mathbb{H}^{-\theta_2},~~B: \mathbb{H}^{\theta_2}\rightarrow
\mathbb{H}^{-\theta_2}$$ are well defined. In particular, by Lemma
2.2, we have
$$\langle B(Gv,u),w\rangle=-\langle B(Gv,w),u\rangle,~~\langle B(Gv,u),u\rangle=0,~~\text{for}~v,u,w\in \mathbb{H}^{\theta_2}.$$

Let $F:\mathbb{H}^{\theta_2}\rightarrow{\mathbb{H}}^{-\theta_{2}}$
be defined by
 $$F(u):=-\nu \Lambda^{2\theta_2}u-B(u),\ \ \text{for ~all}~u\in {\mathbb{H}}^{\theta_2}.$$
We only need to verify that all conditions of Theorem 5.1.3 in \cite{LR3} hold for \eref{spde}.

(1) Since $B$ is bilinear map,  the hemicontinuity
 of $F$ is obvious.

(2) Note that $\langle B(Gu,u),u \rangle =0$, it is easy to verify the
following coercivity condition:
$$ 2\langle F(u),u\rangle +\|g(u)\|_{L_2(U,\mathbb{H}^0)}^2 \leq -\nu \|u\|^2_{\theta_2}+C(1+\|u\|_{L^2}^2).$$

(3) By the Young's inequality, Lemma 2.3 and Lemma 3.1, for $u_1,u_2\in{\mathbb{H}}^{\theta_2}$ we have
\begin{eqnarray}
&&\langle
B(u_1)-B(u_2),u_1-u_2\rangle\nonumber\\=\!\!\!\!\!\!\!\!&&\langle
B(G(u_1-u_2),u_2),u_1-u_2\rangle
\nonumber\\\leq\!\!\!\!\!\!\!\!&&C\left(\|G(u_1-u_2)\|_{2\theta_1+\theta_2}\|u_1-u_2\|_{L^2}+\|G(u_1-u_2)\|_{2\theta_1}\|u_1-u_2\|_{\theta_2}\right)\|u_2\|_{\theta_2}
\nonumber\\\leq\!\!\!\!\!\!\!\!&&C\|u_1-u_2\|_{\theta_2}\|u_1-u_2\|_{L^2}\|u_2\|_{\theta_2}
\nonumber\\\leq\!\!\!\!\!\!\!\!&&\frac{\nu}{4}\|u_1-u_2\|^2_{\theta_2}+C_\nu\|u_2\|^2_{\theta_2}\|u_1-u_2\|_{L^2}^2.
\end{eqnarray}
Then by {Hypothesis} (3.1) we can deduce that
 \begin{eqnarray*}
&&2\langle F(u_1)-F(u_2),u_1-u_2\rangle +\|g(u_1)-g(u_2)\|_{L_2(U,\mathbb{H}^0)}^2\\
\leq\!\!\!\!\!\!\!\!&&C\left(1+\rho(u_2)+\|u_2\|^2_{\theta_2}\right)\|u_1-u_2\|_{L^2}^2,
\end{eqnarray*}
hence the local monotonicity condition holds.

(4) By Lemma 2.2 and Lemma 3.1, for any $u,w\in{\mathbb{H}}^{\theta_2}$
$$|\langle B(u),w\rangle|=|\langle B(Gu,w),u\rangle|\leq
C\left(\|Gu\|_{2\theta_1+\theta_2}\|u\|_{L^2}+\|Gu\|_{2\theta_1}\|u\|_{\theta_2}\right)\|w\|_{\theta_2},$$
from which and Lemma 2.3 we have the following growth condition:
$$\|F(u)\|^2_{-\theta_2}\leq C \|u\|^2_{\theta_2}( 1+\|u\|_{L^2}^2).$$

Therefore, all conclusions follow from Theorem 5.1.3 in \cite{LR3}.
The proof is completed. \hspace{\fill}$\Box$
\end{proof}

\section{Main results for $\theta_2>0$ with initial data in $\mathbb{H}^1$ }
\setcounter{equation}{0}
 \setcounter{definition}{0}

In this section, we show the existence and uniqueness of solutions
to Eq.~\eref{spde} for $\theta_2>0$ with initial data in
$\mathbb{H}^1$. Here we first prove that Eq.~\eref{spde} is local
well-posedness and then show that the local solution is global.
Firstly, we make the following assumptions of $g$ such that
Eq.~\eref{spde} has a unique local strong solution.

\medskip
\noindent\textbf{{Hypothesis} (4.1)} Suppose that $g$ is measurable
mapping from $\mathbb{H}^0$ to ${L_2(U,\mathbb{H}^0)}$ and it
satisfies the following conditions:
\begin{enumerate}[(i)]
\item  For all $s\in[1,2]$, $g$ is an operator from $\mathbb{H}^s$ to
$L_2(U,\mathbb{H}^s)$ and there exists a locally bounded function
$\rho_1$ on $\mathbb{R}$ such that for all $u\in\mathbb{H}^s$
$$\|\Lambda^sg(u)\|_{L_2(U,\mathbb{H}^0)}\leq
\rho_1(\|{u}\|)(1+\|u\|_{s}). $$

\item  There exist locally bounded functions $\rho_2$ and
$\rho_3$ on $\mathbb{R}$ such that for all $u,v\in\mathbb{H}^1$

$$\|g(u)-g(v)\|_{L_2(U,\mathbb{H}^0)}\leq \left(\rho_2(\|{u}\|)
+\rho_3(\|{v}\|)\right)\|u-v\|_{L^2}. $$

\end{enumerate}

In order to prove the global well-posedness, we also need to impose
some further assumptions on $g$.

\medskip
\noindent\textbf{{Hypothesis} (4.2)} There exists a constant $C$
such that

$$  \|g(u)\|_{L_2(U, \mathbb{H}^0)}^2\leq
C(1+\|u\|_{L^2}^2),\ \ \text{for ~all}\ u\in\mathbb{H}^{\theta_2}$$
and
$$\|\Lambda{g}(u)\|_{L_2(U,\mathbb{H}^0)}^2\leq
C(1+\|u\|^2),\ \ \text{for ~all}\  u\in\mathbb{H}^{\theta_2+1}.$$

\begin{Rem} The assumptions of $g$ in Hypotheses (4.1) and (4.2) may be shown to cover a wider class of examples,
 including but not limit to the classic cases of additive and linear multiplicative noise.

\end{Rem}

 We recall the following notions of local, maximal and global
solutions of Eq.~\eref{spde}.

\begin{definition}
Fix a stochastic basis $(\Omega, \mathscr{F}, \mathbb{P}, \mathscr
{F}_t, W)$.
\begin{enumerate}[(i)]
  \item A local strong solution of $\eref{spde}$ is a pair $(u,\tau)$, where
$\tau$ is an $\mathcal {F}_t$-stopping time and $(u(t))_{t\geq0}$ is
a predictable $\mathbb{H}^1$-valued process such that
$u(\cdot\wedge\tau)\in{L}^2(\Omega;L^2_{loc}([0,\infty);\mathbb{H}^{\theta_2+1}))$,
$$u(\cdot\wedge\tau)\in{C}([0,\infty);\mathbb{H}^{1})~~~\mathbb{P}{\text -a.s.},$$
and for every
$t\geq0,\zeta\in\cap_{l=1}^\infty\mathbb{H}^{l}$,~$\mathbb{P}$-a.s.,
\begin{eqnarray}
\langle u(t\wedge\tau),\zeta\rangle+\int_0^{t\wedge\tau}\langle
\nu\Lambda^{2\theta_2}u+ B(u),\zeta\rangle ds=\langle
u_0,\zeta\rangle+\int_0^{t\wedge\tau}\langle
{g}(u)dW,\zeta\rangle.\label{4.3}
\end{eqnarray}

  \item We say that local pathwise uniqueness holds if given any pair $(u^1,\tau^1)$ and $(u^2,\tau^2)$ of local strong solutions of
\eref{spde} with the same initial condition, the following holds:
$$\mathbb{P}\left\{u^1(t)=u^2(t);\forall t\in[0,\tau^1\wedge\tau^2]\right\}=1.$$

  \item A maximal strong solution of $\eref{spde}$ is a pair
$(u^R,\tau_R)_{R\in{\mathbb{N}}}$ such that for each
$R\in\mathbb{N}$, the pair $(u^R,\tau_R)$ is a local strong
solution, $\tau_R$ is increasing such that
$\xi:=\lim_{R\rightarrow\infty}\tau_R>0,~\mathbb{P}$-a.s. and
\begin{eqnarray}\sup_{t\in[0,\tau_R]}\|u^R(t)\|\geq R,~~~~\mathbb{P}{\text -a.s.} \text{on the
set}~\{\xi<\infty\}.\label{4.3c}
\end{eqnarray}

  \item If the local pathwise uniqueness holds, then $\xi$ does not
depend on the sequences
$(u^R)_{R\in{\mathbb{N}}}$,
$(\tau_R)_{R\in{\mathbb{N}}}$. In this case
we denote the maximal solution by
$(u,(\tau_R)_{R\in{\mathbb{N}}},\xi)$, and we say a maximal strong
solution $(u,(\tau_R)_{R\in{\mathbb{N}}},\xi)$ is global if
$\xi=\infty~\mathbb{P}$-a.s.
\end{enumerate}
\end{definition}

Now we formulate the main result in this section concerning the
global well-posedness of Eq.~\eref{spde}.

\begin{theorem}\label{Th2}
Suppose that $\theta_1\ge 0, \theta_2 > 0$ with $\theta_1+\theta_2
\geq\frac{5}{4}$ and the {Hypotheses} (4.1), (4.2) hold. Then for
any $u_0~{\in}~L^2(\Omega;\mathbb{H}^{1})$ \eref{spde} has a unique
global strong solution. More precisely, there exists a unique
predictable $\mathbb{H}^1$-valued process $\{u(t)\}_{t\geq0}$ such
that
$$u\in{L}^2_{loc}([0,\infty);\mathbb{H}^{\theta_2+1})\cap{C}([0,\infty);\mathbb{H}^{1})~~~\mathbb{P}{\text -a.s.},$$
and for every $t\geq0,\zeta\in\cap_{l=1}^\infty\mathbb{H}^{l}$
\begin{eqnarray}
\langle u(t),\zeta\rangle+\int_0^{t}\langle \nu\Lambda^{2\theta_2}u+
B(u),\zeta\rangle ds =\langle u_0,\zeta\rangle+\int_0^{t}\langle
{g}(u)dW,\zeta\rangle\ \ \mathbb{P}\text{-a.s}.\label{4.3b}
\end{eqnarray}
\end{theorem}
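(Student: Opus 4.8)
The plan is to build a unique maximal local strong solution and then to show it cannot explode, so that $\xi=\infty$ almost surely. For the local theory I would adapt the truncation scheme of \cite{RZZ1}: fix a smooth cut-off $\psi_R:[0,\infty)\to[0,1]$ supported in $[0,2R]$ with $\psi_R\equiv1$ on $[0,R]$, and replace the drift $B(u)$ by $\psi_R(\|u\|)B(u)$. Under Hypothesis (4.1) the truncated nonlinearity is globally Lipschitz into the relevant dual space (this uses only $\theta_1+\theta_2>\tfrac34$, so that $B(u)=B(Gu,u)$ is well defined on $\mathbb{H}^1$ via Lemma 2.3 and Lemma 2.5), and the truncated equation has a unique global $\mathbb{H}^1$-valued solution $u_R$. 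Setting $\tau_R:=\inf\{t:\|u_R(t)\|\ge R\}$ and checking the consistency $u_{R'}=u_R$ on $[0,\tau_R]$ for $R'>R$, one obtains a maximal solution $(u,(\tau_R),\xi)$ in the sense of Definition 4.2; local pathwise uniqueness comes from estimating the $\mathbb{H}^0$-norm of the difference of two solutions up to a common stopping time and invoking the Lipschitz bound in Hypothesis (4.1)(ii) together with Gronwall.

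The core of the argument is an a priori bound in $\mathbb{H}^1$. I would apply It\^o's formula to $\|u(t)\|^2$ (legitimate on $[0,\tau_R]$ because there $u\in L^2_{\loc}(\mathbb{H}^{\theta_2+1})$ and, by Hypothesis (4.1)(i) with $s=1$, $g(u)\in L_2(U,\mathbb{H}^1)$). The dissipative term yields $-2\nu\|u\|_{\theta_2+1}^2$, while the delicate contribution is the nonlinear one. Writing $v=Gu$ and using that $\Lambda u$ is divergence free so that the Leray projection drops out, it equals
$$\langle B(u),u\rangle_{\mathbb{H}^1}=\langle \Lambda(v\cdot\nabla u),\Lambda u\rangle=\langle [\Lambda,v]\cdot\nabla u,\Lambda u\rangle+\langle v\cdot\nabla\Lambda u,\Lambda u\rangle.$$
Since $\nabla\cdot v=0$, an integration by parts kills the second term, $\langle v\cdot\nabla\Lambda u,\Lambda u\rangle=0$; this is the substitute for the cancellation $\langle B(u),u\rangle=0$ that is no longer available at the $\mathbb{H}^1$ level. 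The remaining commutator is then estimated by Lemma 2.4 as $|\langle[\Lambda,v]\cdot\nabla u,\Lambda u\rangle|\le C\|[\Lambda,v]\nabla u\|_{L^2}\|u\|$.

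It remains to close this estimate. Lemma 2.4 bounds $\|[\Lambda,v]\nabla u\|_{L^2}$ by products of one derivative of $v$ and one derivative of $u$ in suitable Lebesgue norms; since $v=Gu$ gains $2\theta_1$ derivatives by Lemma 2.3, I would redistribute regularity between the two factors via the Sobolev embeddings of Lemma 2.1 and Gagliardo--Nirenberg interpolation, and then use Young's inequality to arrive at a bound of the form $\tfrac{\nu}{2}\|u\|_{\theta_2+1}^2+C(1+\|u\|^2)^{\gamma}\|u\|^2$. The exponent bookkeeping in this interpolation is precisely what forces $\theta_1+\theta_2\ge\tfrac54$, with equality being the critical borderline. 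Combining this with Hypothesis (4.2) to control the It\^o correction $\|\Lambda g(u)\|_{L_2(U,\mathbb{H}^0)}^2\le C(1+\|u\|^2)$, one reaches a differential inequality
$$d\|u\|^2+\nu\|u\|_{\theta_2+1}^2\,dt\le \Phi(u)\,\|u\|^2\,dt+dM_t,$$
where $\Phi(u)\le C(1+\|u\|^2)^{\gamma}$ and $M_t$ is a local martingale with quadratic variation controlled by $\int_0^t(1+\|u\|^2)\,ds$. Feeding this into the stochastic Gronwall lemma of \cite{GZ1} on $[0,\tau_R\wedge T]$ shows that $\sup_{t\le T}\|u(t)\|<\infty$ almost surely for every $T$, which contradicts the blow-up criterion \eref{4.3c} on $\{\xi<\infty\}$; hence $\xi=\infty$ a.s. Uniqueness of the global solution follows from local pathwise uniqueness together with Definition 4.2(iv).

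The main obstacle is exactly this critical $\mathbb{H}^1$-estimate: the loss of the trilinear cancellation forces the commutator route, and balancing the commutator term at the borderline $\theta_1+\theta_2=\tfrac54$ leaves no slack, so the coefficient $\Phi(u)$ cannot be taken in $L^p(\Omega)$. This is why, at criticality, one can only establish almost-sure continuity of $u$ into $\mathbb{H}^1$ rather than the stronger integrability $L^p(\Omega;L^\infty_{\loc}([0,\infty);\mathbb{H}^1))\cap L^2(\Omega;L^2_{\loc}([0,\infty);\mathbb{H}^{\theta_2+1}))$, which the same computation does deliver in the subcritical range $\theta_1+\theta_2>\tfrac54$ where there is room to spare in the interpolation.
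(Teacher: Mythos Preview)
Your local existence sketch and the commutator manoeuvre at the $\mathbb{H}^1$ level are both in line with the paper. The gap is in how you close the global bound. You propose to land on
\[
d\|u\|^2+\nu\|u\|_{\theta_2+1}^2\,dt\le \Phi(u)\,\|u\|^2\,dt+dM_t,\qquad \Phi(u)\le C(1+\|u\|^2)^{\gamma},
\]
and then feed this into the stochastic Gronwall lemma of \cite{GZ1}. That lemma, however, requires $\int_0^\tau \Phi(u)\,ds\le\kappa$ almost surely for a deterministic $\kappa$ independent of the stopping time at which you truncate. With $\Phi$ controlled only by powers of the very quantity $\|u\|^2$ you are trying to bound, the argument is circular: the only way you know $\int_0^{\tau_R}\Phi(u)\,ds<\infty$ is through the cutoff $\tau_R$, and the resulting constant blows up with $R$, so you cannot conclude $\sup_{t\le T}\|u(t)\|<\infty$ a.s.

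The paper breaks this circularity by a two-tier estimate that your proposal skips. First, an $\mathbb{H}^0$-energy identity (where the trilinear cancellation $\langle B(u),u\rangle=0$ is still available) and Hypothesis (4.2) give, for any $t$,
\[
\mathbb{E}\Big(\sup_{s\le\xi\wedge t}\|u(s)\|_{L^2}^2+\int_0^{\xi\wedge t}\|u(s)\|_{\theta_2}^2\,ds\Big)<\infty
\]
(Lemma 4.1). Second, at the $\mathbb{H}^1$ level the commutator estimate is arranged so that the coefficient is the $\mathbb{H}^{\theta_2}$-norm, not the $\mathbb{H}^1$-norm: with the Sobolev exponents chosen so that $\|\Lambda Gu\|_{L^{p_1}}\le C\|Gu\|_{5/2-\theta_2}\le C\|u\|_{\theta_2}$ (this is exactly where $\theta_1+\theta_2\ge\tfrac54$ enters), one gets
\[
|\langle\Lambda B(u),\Lambda u\rangle|\le C\|u\|_{\theta_2}\,\|u\|\,\|u\|_{\theta_2+1}\le \varepsilon\|u\|_{\theta_2+1}^2+C_\varepsilon\|u\|_{\theta_2}^2\,\|u\|^2.
\]
Now $\Phi(u)=1+\|u\|_{\theta_2}^2$, and the first-tier estimate controls $\int_0^{\gamma_K}\Phi(u)\,ds\le K+T$ a.s.\ for the auxiliary stopping time $\gamma_K=\inf\{t:\int_0^{t\wedge\xi}\|u\|_{\theta_2}^2\,ds\ge K\}$. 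The stochastic Gronwall lemma then applies on $[0,\gamma_K\wedge\rho_M\wedge t]$ with a constant independent of $M$, and a short $\mathbb{P}(\rho_M\le t)\to 0$, $\mathbb{P}(\gamma_K\le t)\to 0$ argument finishes the proof. Your interpolation ``$\Phi(u)\le C(1+\|u\|^2)^\gamma$'' throws away precisely the structure that makes the lemma applicable; you need the $\mathbb{H}^0$-level bound and the sharper commutator placement to decouple the Gronwall coefficient from the unknown.
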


The proof of Theorem 4.1 will be given in the section 4.2.

\subsection{Local existence and uniqueness}
In this section, we establish the existence of local solution and
maximal solution for Eq.~\eref{spde} with initial data in
$L^2(\Omega;\mathbb{H}^{1})$. The proof is based on the results in
\cite{RZZ1}, which have been applied for various types of SPDEs with
fractional dissipation.

\begin{theorem}\label{Th3} Suppose that $\theta_1\ge 0$, $\theta_2>0$ with $\theta_1+\theta_2 > \frac{3}{4}$  and the Hypothesis (4.1) hold. Then for any
$u_0~{\in}~L^2(\Omega;\mathbb{H}^{1})$, we have local pathwise
uniqueness strong solution for \eref{spde}, and there exists a
maximal strong solution $(u,(\tau_R)_{R\in{\mathbb{N}}},\xi)$ of
\eref{spde}.
\end{theorem}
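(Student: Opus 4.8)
The plan is to recast \eref{spde} in the abstract framework for stochastic PDEs with fractional dissipation developed in \cite{RZZ1} and to verify its structural hypotheses; local pathwise uniqueness and the existence of a maximal solution with the blow-up alternative \eref{4.3c} then follow from the abstract local well-posedness theorem there. Writing $A:=-\nu\Lambda^{2\theta_2}$ and $F(u):=-B(u)=-B(Gu,u)$, equation \eref{spde} becomes $du=(Au+F(u))\,dt+g(u)\,dW$, with phase space $\mathbb{H}^1$ and dissipation-gain space $\mathbb{H}^{\theta_2+1}$. Since $A$ is the self-adjoint negative fractional Laplacian of order $\theta_2$, generating the dissipation $\langle\Lambda A u,\Lambda u\rangle=-\nu\|u\|_{\theta_2+1}^2$, the genuine work is to establish the local boundedness and local Lipschitz estimates on $F$ and $g$ in the correct Sobolev scale, with a subcritical exponent governed by the condition $\theta_1+\theta_2>\frac{3}{4}$.

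The heart of the matter is the $\mathbb{H}^1$ estimate of the nonlinear term, where the cancellation $\langle\Lambda B(Gu,u),\Lambda u\rangle=0$ available at the $L^2$ level fails. To recover it I would use the commutator decomposition
$$\Lambda B(Gu,u)=P_\sigma\big([\Lambda,Gu\cdot\nabla]u+(Gu\cdot\nabla)\Lambda u\big),$$
where $[\Lambda,Gu\cdot\nabla]u:=\Lambda((Gu\cdot\nabla)u)-(Gu\cdot\nabla)\Lambda u$. Pairing with $\Lambda u$ annihilates the transport part, $\langle(Gu\cdot\nabla)\Lambda u,\Lambda u\rangle=0$ by $\nabla\cdot Gu=0$, and leaves only $\langle P_\sigma[\Lambda,Gu\cdot\nabla]u,\Lambda u\rangle\le\|[\Lambda,Gu\cdot\nabla]u\|_{L^2}\,\|\Lambda u\|_{L^2}$. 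Applying the commutator estimate (Lemma 2.4) with $s=1$, the regularization gain of $G$ (Lemma 2.3, supplying $2\theta_1$ extra derivatives) and the Sobolev embeddings of Lemma 2.1, I would bound the commutator by a subcritical combination of $\|u\|$ and $\|u\|_{\theta_2+1}$, so that the top-order part is absorbed into the dissipation $\frac{\nu}{2}\|u\|_{\theta_2+1}^2$ by Young's inequality, leaving a factor polynomial in $\|u\|$. The constraint $\theta_1+\theta_2>\frac{3}{4}$ is exactly what keeps every Hölder/Sobolev exponent $p_1,\dots,p_4$ of Lemma 2.4 inside the admissible range $(1,\infty)$ and the fractional orders below $\frac{3}{2}$, so the estimate closes strictly below the energy threshold. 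The corresponding local Lipschitz bound on $B(u)-B(v)$ follows by the same decomposition applied to $B(G(u-v),u)+B(Gv,u-v)$ together with Lemma 3.1.

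For the noise, Hypothesis (4.1)(i) furnishes the growth bound $\|\Lambda^s g(u)\|_{L_2(U,\mathbb{H}^0)}\le\rho_1(\|u\|)(1+\|u\|_s)$ for $s\in[1,2]$, and (4.1)(ii) the local Lipschitz bound, which are precisely the conditions required of the diffusion coefficient in \cite{RZZ1}. With $F$ and $g$ verified, I would multiply the nonlinearity by a smooth cutoff $\chi_R(\|u\|)$ to obtain a globally Lipschitz truncated equation, solve it by the abstract theorem, and set $\tau_R:=\inf\{t\ge0:\|u(t)\|\ge R\}$. Pathwise uniqueness on $[0,\tau_R]$ follows from the local Lipschitz estimates via a stochastic Gronwall argument, and patching the truncated solutions across increasing $R$ produces the maximal solution $(u,(\tau_R)_{R\in\NN},\xi)$ with $\xi=\lim_{R\to\infty}\tau_R$; the blow-up alternative \eref{4.3c} holds by construction, since $\sup_{t\in[0,\tau_R]}\|u(t)\|\ge R$ on $\{\xi<\infty\}$.

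The main obstacle is the commutator estimate for $B$ in $\mathbb{H}^1$: making the Sobolev bookkeeping close at the subcritical threshold $\theta_1+\theta_2>\frac{3}{4}$ requires a careful choice of the exponents in Lemma 2.4 that simultaneously balances the $2\theta_1$-smoothing of $G$ against the one derivative lost to $\nabla$ and the order-one excess over the $\theta_2$-dissipation, all while remaining within the embedding ranges of Lemma 2.1.
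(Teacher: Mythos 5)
Your strategy coincides with the paper's for part of the parameter range, but it has a genuine gap: you run the entire proof through the abstract local well-posedness theorem of \cite{RZZ1}, whereas that framework only covers $0<\theta_2\leq 1$. The theorem you are proving allows any $\theta_2>0$ (e.g. $\theta_1=0$, $\theta_2=2$), and the paper explicitly splits the proof into two cases precisely because the \cite{RZZ1} setting ``is not adapted to $\theta_2>1$.'' Concretely, the Sobolev exponents you would feed into Lemma 2.4, of the type $\tfrac{1}{p_2}=\tfrac12-\tfrac{\theta_2}{3}$, leave the admissible range once $\theta_2$ is large, and more fundamentally the structural conditions (b.1)--(b.3) of the cited theorem are formulated on a scale that does not accommodate $\theta_2>1$. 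For that case the paper abandons the abstract theorem entirely and constructs the solution by hand: Galerkin approximation of the $\chi_R$-truncated equation, uniform energy estimates, tightness via Aldous's criterion and the Flandoli--Gatarek compactness lemma in $W^{\kappa,2}([0,T];\mathbb{H}^{1-\theta_2})\cap L^2([0,T];\mathbb{H}^{\theta_2+1})$, Skorokhod embedding and the martingale representation theorem to produce a martingale solution, continuity in $\mathbb{H}^1$ by splitting off the stochastic convolution $z$ and treating $\bar u=\tilde u-z$ as a pathwise deterministic PDE, and finally pathwise uniqueness plus Yamada--Watanabe to upgrade to a strong solution. Your remark that one can ``multiply the nonlinearity by a smooth cutoff $\chi_R(\|u\|)$ to obtain a globally Lipschitz truncated equation'' does not close this gap: the truncation is only in the $\mathbb{H}^1$ norm while $B$ still loses a derivative, so the truncated equation is not globally Lipschitz in any space where a fixed-point or standard SDE argument applies, and existence still requires the compactness machinery above.

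For the range $0<\theta_2\leq 1$ your plan does match the paper's Case 1: verify the coercivity, growth and local monotonicity conditions of \cite{RZZ1} using the commutator decomposition $\Lambda((Gu\cdot\nabla)u)=[\Lambda,Gu\cdot\nabla]u+(Gu\cdot\nabla)\Lambda u$ together with Lemmas 2.1, 2.3 and 2.4, and the identity $\langle (Gu\cdot\nabla)\Lambda u,\Lambda u\rangle=0$. One technical difference worth flagging: you pair the commutator with $\Lambda u$ in $L^2\times L^2$, whereas the paper deliberately pairs $L^p\times L^q$ with $\tfrac1p=\tfrac12+\tfrac{\delta}{3}$, $\tfrac1q=\tfrac12-\tfrac{\delta}{3}$ and $\delta\in(0,\theta_2)$ chosen so that $2\theta_1+\theta_2+\delta\geq\tfrac32$; this shifts some of the burden onto the intermediate norm $\|u\|_{\delta+s}$ and makes the interpolation close cleanly at the threshold $\theta_1+\theta_2>\tfrac34$. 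Your $L^2$ pairing can be made to work with additional interpolation, but you have not carried out the exponent bookkeeping, which is where the condition $\theta_1+\theta_2>\tfrac34$ actually enters, so at minimum that computation needs to be supplied.
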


\begin{Rem}
In particular, Theorem 4.2 is applicable for the case $\theta_1=0$
and $\theta_2> \frac{3}{4}$. Hence our results generalize the
results of stochastic 3D Navier-Stokes equations ($\theta_1=0$ and
$\theta_2=1$) in \cite{GZ1}.
\end{Rem}

\begin{Rem}If $(u,(\tau_R)_{R\in{\mathbb{N}}})$ is a local strong solution for \eref{spde}, then we have
\begin{eqnarray}
{\mathbb{E}}\left(\sup_{0\leq t\leq \tau_R}\|u(t)\|
^2+\int_0^{\tau_R}\|u(t)\|_{\theta_2+1}^2dt\right)
<\infty.\label{4.10d}
\end{eqnarray}
 However, we are not able to show that \eref{4.10d} holds
when
 replacing the stopping time $\tau_R$ by any fixed (deterministic)
 $T>0$. This is the case even in the case  $\theta_1\ge 0, \theta_2 > 0$ and
$\theta_1+\theta_2\geq\frac{5}{4}$ where we can prove the existence
of global strong solution.

In the section 4.3, we will prove that \eref{4.10d} also holds for any fixed (deterministic)
 $T>0$ in the subcritical case, i.e. $\theta_1\ge 0$ and $\theta_2 > 0$ with $\theta_1+\theta_2>\frac{5}{4}$.
\end{Rem}

\begin{Rem}
We remark that the proof can be spited in two cases, namely the case
$0<\theta_2\leq1$ and the case $\theta_2>1$. For $0<\theta_2\leq1$, we can use the result in \cite{RZZ1}.
While, the framework is not adapted to $\theta_2>1$. We shall give a direct proof for this case.
\end{Rem}

Before the proof of Theorem 4.2, we introduce the following space
for later use. Let $K$ be a separable space, given
$p>1,\kappa\in(0,1)$, let $W^{\kappa,p}([0,T];K)$ be the Sobolev
space of all $u\in L^p([0,T];K)$ such that
$$\int_0^T\!\!\!\!\int_0^T\frac{|u(t)-u(s)|_K^p}{|t-s|^{1+\kappa p}}dtds<\infty,$$
endowed with the norm
$$\|u\|_{W^{\kappa,p}([0,T];K)}^p:=\int_0^T|u|_K^pdt+\int_0^T\!\!\!\!\int_0^T\frac{|u(t)-u(s)|_K^p}{|t-s|^{1+\kappa p}}dtds.$$
For the case $\kappa=1$, we take $W^{1,p}([0,T];K):=\{u\in
L^p([0,T];K);\frac{du}{dt}\in L^p([0,T];K)\}$
 with the norm
$$\|u\|_{W^{1,p}([0,T];K)}^p:=\int_0^T|u|_K^p+\left|\frac{du}{dt}\right|_K^pdt.$$
Note that for $\kappa\in(0,1)$, $W^{1,p}([0,T];K)\subset
W^{\kappa,p}([0,T];K)$ and there exists a constant $C>0$ such that
$\|u\|_{W^{\kappa,p}([0,T];K)}\leq C\|u\|_{W^{1,p}([0,T];K)}$.

 \vspace{2mm}
 {\bf Proof of Theorem 4.2:}\vspace{2mm}

\textbf{{Case 1:}} (The case $\theta_1\ge 0$, $0<\theta_2\leq1$ and $\theta_1+\theta_2>\frac{3}{4}$)

\vspace{2mm}
For this case, we shall apply Theorem 3.2 of \cite{RZZ1} to
\eref{spde}. Thanks to the assumptions for $g$, it is sufficient to check
the conditions $(b.1)$-$(b.3)$ in \cite{RZZ1} for $B(u)$.

\vspace{2mm}
(1) For $s\in[1,2]$, since $\langle{Gu}\cdot\nabla\Lambda^{s}{u},\Lambda^{s}{u}\rangle=0$, by
the H\"{o}lder's inequality we get that for $u\in{\mathbb{H}^{2s}}$,
\begin{eqnarray}
-\langle{B}(u),\Lambda^{2s}{u}\rangle\!\!\!\!\!\!\!\!&&\leq\left|\langle\Lambda^{s}\big(({Gu}\cdot\nabla){u}\big),\Lambda^{s}{u}\rangle-\langle({Gu}\cdot\nabla)\Lambda^{s}{u},\Lambda^{s}{u}\rangle\right|
\nonumber\\\!\!\!\!\!\!\!\!&&=|\langle[\Lambda^{s},Gu]\cdot\nabla
u,\Lambda^{s}{u} \rangle|
\nonumber\\\!\!\!\!\!\!\!\!&&\leq{C}\|[\Lambda^{s},Gu]\cdot\nabla
u\|_{L^p}\|\Lambda^{s}{u}\|_{L^q} .\label{4.6b}
\end{eqnarray}
Here we choose
$$\frac{1}{p}=\frac{1}{2}+\frac{\delta}{3},~~~~\frac{1}{q}=\frac{1}{2}-\frac{\delta}{3},$$
with $\delta=\delta(\theta_2)\in(0,\theta_2)$ such that
$2\theta_1+\theta_2+\delta\geq\frac{3}{2}$.

 To bound the first term of \eref{4.6b}, we make use of the
commutator estimate and get that
\begin{eqnarray}
\|[\Lambda^{s},Gu]\cdot\nabla u\|_{L^p} \leq
C\left(\|\Lambda{G}u\|_{L^{p_1}}\|\Lambda^{s}u\|_{L^{p_2}}+\|\Lambda{u}\|_{L^{p_3}}\|\Lambda^{s}{Gu}\|_{L^{p_4}}\right).\nonumber
\end{eqnarray}
Here we choose
\begin{eqnarray}\frac{1}{p_1}=\frac{\theta_2+\delta}{3},
~~\frac{1}{p_2}=\frac{1}{2}-\frac{\theta_2}{3},~~{p_3}={2},~~\frac{1}{p_4}=\frac{\delta}{3}.\label{4.7b}\end{eqnarray}
$\theta_2\in(0,1]$ implies ${p_1}\in(\frac{3}{2},\infty)$, $p_2\in(2,6]$ and $p_4\in(3,\infty)$.

 Since
$\theta_1+\theta_2>\frac{3}{4}$ and
$2\theta_1+\theta_2+\delta\geq\frac{3}{2}$, by Lemma 2.1 and Lemma
2.3, we obtain
$$\|\Lambda{Gu}\|_{L^{p_1}}\leq{C}\|Gu\|_{\frac{5}{2}-\theta_2-\delta}\leq{C}\|u\|,$$
$$\|\Lambda^s{u}\|_{L^{p_2}}\leq C\|u\|_{\theta_2+s},$$
$$\|\Lambda^s{Gu}\|_{L^{p_4}}\leq{C}\|Gu\|_{\frac{3}{2}-\delta+s}\leq
C\|u\|_{\theta_2+s},$$
$$\|\Lambda^s{u}\|_{L^{q}}\leq C\|u\|_{\delta+s}.$$
Therefore, putting the above estimates all together and by the
interpolation inequality as well as the Young's inequality, it leads
to
$$-\langle{B}(u),\Lambda^{2s}{u}\rangle\leq{C}\|u\|\|u\|_{\delta+s}\|u\|_{\theta_2+s}\leq\varepsilon\|u\|^2_{\theta_2+s}
+{C_{\varepsilon}}\|u\|^{2+2\frac{\theta_2+s-1}{\theta_2-\delta}}.$$
Thus, the coercivity condition (b.1) in \cite{RZZ1} is satisfied.

\vspace{2mm}
(2) Lemma 2.1 and Lemma 2.5 imply that, for $u\in{\mathbb{H}^{2+\theta_2}}$,
$$\|B(u)\|_{L^2}\leq{C}\|u\|\|u\|_{\theta_2+1}$$
and
\begin{eqnarray}\|\Lambda{B}(u)\|_{L^2}
\leq{C}(\|{G}u\|_{L^{p_1}}\|\Lambda^{2}u\|_{L^{p_2}}
+\|\Lambda{Gu}\|_{L^{p_3}}\|\Lambda{u}\|_{L^{p_4}}).\label{4.12}
\end{eqnarray}
Now we choose $\frac{1}{p_1}=\frac{\theta_2}{3},
~~\frac{1}{p_2}=\frac{1}{2}-\frac{\theta_2}{3}$,  so by the Sobolev
embeddings we get
$$\|{Gu}\|_{L^{p_1}}\leq{C}\|Gu\|_{\frac{3}{2}-\theta_2}\leq{C}\|u\|,$$
$$\|\Lambda^2{u}\|_{L^{p_2}}\leq C\|u\|_{\theta_2+2}.$$
For the last two terms in the right side of \eref{4.12}, if
$0<\theta_2\leq\frac{1}{2}$, we take $p_3=3,~{p_4}=6$ and the
assumption $\theta_1+\theta_2>\frac{3}{4}$ gives
$\theta_1\geq\frac{1}{4}$. So, we can get
$\|\Lambda{Gu}\|_{L^{p_3}}\leq{C}\|Gu\|_{\frac{3}{2}}\leq C\|u\|$
and $\|\Lambda{u}\|_{L^{p_4}}\leq C\|u\|_{2}\leq
C\|u\|_{\theta_2+2}.$ On the other hand, if
$\frac{1}{2}<\theta_2\leq 1$, we choose $p_3=2,~{p_4}=\infty$, Lemma
2.1 implies that $\|\Lambda{u}\|_{L^{\infty}}\leq
C\|u\|_{\theta_2+2}$.

Thus $$\|\Lambda{B}(u)\|_{L^2}\leq{C}\|u\|\|u\|_{\theta_2+2},$$
which shows that the growth condition (b.2) in \cite{RZZ1} is
proved.

 (3) For the local
monotonicity condition (b.3) in \cite{RZZ1}, since
$2\theta_1+\delta+\theta_2\geq\frac{3}{2}$, Lemma 2.2 (with $m_1=2\theta_1+\delta,m_2=0,m_3=\theta_2$) yields
\begin{eqnarray}
|\langle B(u_1)-B(u_2),u_1-u_2\rangle|\!\!\!\!\!\!\!\!
&&= |\langle B(G(u_1-u_2),u_2),u_1-u_2\rangle|\nonumber\\
&&\leq C\|G(u_1-u_2)\|_{2\theta_1+\delta}\|u_2\|\|u_1-u_2\|_{\theta_2}\nonumber\\
&&\leq C\|u_1-u_2\|_{\delta}\|u_2\|\|u_1-u_2\|_{\theta_2}\nonumber\\
&&\leq C\|u_1-u_2\|_{L^2}^{\frac{\theta_2-\delta}{\theta_2}}\|u_2\|\|u_1-u_2\|_{\theta_2}^{\frac{\theta_2+\delta}{\theta_2}}\nonumber\\
&&\leq
\varepsilon\|u_1-u_2\|^2_{\theta_2}+C_\varepsilon\|u_2\|^{\frac{2\theta_2}{\theta_2-\delta}}\|u_1-u_2\|_{L^2}^2,\label{6.2}
\end{eqnarray}
where we use Lemma 2.3 in the second inequality, the interpolation
inequality in the third inequality and the Young's inequality in the
last inequality.

 Hence all conclusions follow from Theorem 3.2 in \cite{RZZ1}. The proof is completed in this case.

\vspace{2mm} \textbf{{Case 2:} }( The case $\theta_1\ge 0$ and
$\theta_2>1$)

We first establish the existence of weak solutions to the following
equation:
\begin{eqnarray}
\left\{ \begin{aligned}
&d{u}+\nu{\Lambda}^{2\theta_2}udt+\chi_R(\|u\|)B(u)dt=\chi_R(\|u\|)g(u)dW,\\
&u(0)=u_0,
\end{aligned} \right.\label{4.41}
\end{eqnarray}
where $R>0$ is a fixed constant, and
$\chi_R:[0,\infty)\rightarrow[0,1]$ is a $C^\infty$ smooth function
such that
$$\chi_R(x)=\left\{
              \begin{array}{ll}
                1 & \hbox{for $ x \leq{R}$,} \\
                0, & \hbox{for $ x >{2R}$.}
              \end{array}
            \right.
$$

We denote by $P_n$ the projection operator onto
$\mathbb{H}_n:={\rm span}\{e^{ik\cdot{x}}:|k|\leq{n}\}$. Consider the
Galerkin approximation $u_n$ of \eref{4.41} as
\begin{eqnarray}
\left\{ \begin{aligned}
&d{u_n}+\nu{\Lambda}^{2\theta_2}u_ndt+\chi_R(\|u_n\|)P_nB(u_n)dt=\chi_R(\|u_n\|)P_ng(u_n)dW,\\
&u_n(0)=P_nu_0.
\end{aligned} \right.\label{4.42}
\end{eqnarray}
Then by the theory of SDE in finite-dimension space (see, e.g.
\cite{LR3}), there exists a unique global solution to \eref{4.42}.
According to It\^{o}'s formula, we obtain
\begin{eqnarray}
d{\|u_n\|^2}+2\nu\|u_n\|^2_{\theta_2+1}dt\!\!\!\!\!\!\!\!&&\leq-2\chi_R(\|u_n\|)\langle
B(u_n),\Lambda^2u_n\rangle dt
\nonumber\\\!\!\!\!\!\!\!\!&&~~+\chi_R(\|u_n\|)\|\Lambda
g(u_n)\|_{L_2(U,\mathbb{H}^0)}^2dt
\nonumber\\\!\!\!\!\!\!\!\!&&~~+2\chi_R(\|u_n\|)\langle \Lambda
u_n,\chi_R(\|u_n\|)\Lambda g(u_n)dW\rangle.\label{4.43}
\end{eqnarray}
By the commutator lemma, one may conclude that
\begin{eqnarray}
\langle{B}(u_n),\Lambda^{2}{u_n}\rangle\!\!\!\!\!\!\!\!&&\leq|\langle[\Lambda,Gu_n]\cdot\nabla
u^{n},\Lambda{u_n} \rangle|
\nonumber\\\!\!\!\!\!\!\!\!&&\leq{C}\|[\Lambda,Gu_n]\cdot\nabla
u_n\|_{L^{\frac{3}{2}}}\|\Lambda{u_n}\|_{L^3}
\nonumber\\\!\!\!\!\!\!\!\!&&\leq{C}\left(\|\nabla
Gu_n\|_{L^6}\|\Lambda u_n\|_{L^2}+\|\Lambda Gu_n\|_{L^6}\|\nabla
u\|_{L^2}\right)\|\Lambda{u_n}\|_{L^3}
\nonumber\\\!\!\!\!\!\!\!\!&&\leq{C}\|u_n\|_{2}\|
u_n\|\|u_n\|_{\frac{3}{2}}
\nonumber\\\!\!\!\!\!\!\!\!&&\leq\varepsilon\|u_n\|_{\theta_2+1}^2+C_\varepsilon\|u_n\|^6
,\label{4.44}
\end{eqnarray} where we use the H\"{o}lder inequality in the second inequality,  the Sobolev embedding
inequality and Lemma 2.3 in the fourth inequality, the interpolation
inequality as well as the Young's inequality in the last inequality.

 Then, $\eref{4.43}$, \eref{4.44} and the {Hypothesis} (4.1) imply
\begin{eqnarray}
d{\|u_n\|^2}+2\nu\|u_n\|^2_{\theta_2+1}dt\!\!\!\!\!\!\!\!&&\leq
[C\chi_R(\|u_n\|)\|u_n\|^2+2\varepsilon\|u_n\|^2_{\theta_2+1}+C]dt
\nonumber\\\!\!\!\!\!\!\!\!&&~~+2\chi_R(\|u_n\|)\langle \Lambda
u_n,\chi_R(\|u_n\|)\Lambda g(u_n)dW\rangle.\nonumber
\end{eqnarray}
By the BDG's inequality, it follows that
\begin{eqnarray}
&&\mathbb{E}\sup_{t\in[0,T]}{\|u_n\|^2}+\mathbb{E}\int_0^T\|u_n\|^2_{\theta_2+1}dt\nonumber\\
\leq\!\!\!\!\!\!\!\!&&
C\mathbb{E}\|u_0\|^2+CT+C\mathbb{E}\left(\int_0^T\|u_n\|^2\chi_R(\|u_n\|)\|\Lambda
g(u_n)\|^2_{L_2(U,\mathbb{H}^0)}dt\right)^{1/2}
\nonumber\\\leq\!\!\!\!\!\!\!\!&& C_T,\label{4.45}
\end{eqnarray}
where $C_T$ is a constant independent of $n$.

Now, we prove that the family $\mathcal{L}(u_n)_{n\in\mathbb{N}}$ is
tight in $C([0,T];\mathbb{H}^{1-\theta_2})$. Here,
$\mathcal{L}(u_n)$ means the law of $u_n$. By \eref{4.45}, for each
$t\in[0,T]$,  $\mathcal{L}(u_n(t))$ is tight on
$\mathbb{H}^{1-\theta_2}$. Then according to Aldous's criterion in
\cite{Al}, it suffices to check that for all stopping times
$\tau_n\leq{T}$ and $\eta_n\rightarrow0$,
\begin{eqnarray}
\lim_n\mathbb{E}\|u_n(\tau_n+\eta_n)-u_n(\tau_n)\|_{1-\theta_2}=0.\label{4.46}
\end{eqnarray}
Note that
\begin{eqnarray}
u_n(\tau_n+\eta_n)-u_n(\tau_n)=\!\!\!\!\!\!\!\!\!\!&&~-\int_{\tau_n}^{\tau_n+\eta_n}(\nu{\Lambda}^{2\theta_2}u_n+\chi_R(\|u_n\|)P_nB(u_n)\dt
\nonumber\\\!\!\!\!\!\!\!\!\!\!&&~{+}\int_{\tau_n}^{\tau_n+\eta_n}\chi_R(\|u_n\|)P_ng(u_n)dW.\label{4.47}
\end{eqnarray}
By \eref{4.45}, we get that for large $n$
$$\mathbb{E}\left\|\int_{\tau_n}^{\tau_n+\eta_n}\nu{\Lambda}^{2\theta_2}u_ndt\right\|_{1-\theta_2}
\leq{C}{\eta_n}^{1/2}\left(\mathbb{E}\int_0^{T+1}\|u_n\|^2_{\theta_2+1}dt\right)^{1/2}\rightarrow0,~~\text{as}~\eta_n\rightarrow0.$$
Thanks to Lemmas 2.2 and 2.3, it is easy to get
\begin{eqnarray}
\|{B}(u_n)\|_{1-\theta_2}\leq
C\|{B}(u_n)\|_{L^2}\leq{C}\|u_n\|\|u_n\|_{\theta_2+1}.\label{4.52}
\end{eqnarray}
From \eref{4.52} we infer that
\begin{eqnarray}\mathbb{E}\left\|\int_{\tau_n}^{\tau_n+\eta_n}\chi_R(\|u_n\|)P_nB(u_n)dt\right\|_{1-\theta_2}
\!\!\!\!\!\!\!\!\!\!&&\leq{C}\mathbb{E}\int_{\tau_n}^{\tau_n+\eta_n}\|u_n\|_{\theta_2+1}dt
\nonumber\\\!\!\!\!\!\!\!\!\!\!&&\leq{C}{\eta_n}^{1/2}\left(\mathbb{E}\int_0^{T+1}\|u_n\|^2_{\theta_2+1}dt\right)^{1/2}\rightarrow0,
\nonumber\\\!\!\!\!\!\!\!\!\!\!&&\text{as}~\eta_n\rightarrow0.\nonumber
\end{eqnarray}
Similarly, we obtain by {Hypothesis} (4.1)
\begin{eqnarray}\mathbb{E}\left\|\int_{\tau_n}^{\tau_n+\eta_n}\chi_R(\|u_n\|)P_ng(u_n)dW\right\|_{1-\theta_2}^2
\!\!\!\!\!\!\!\!\!\!&&\leq{C}\mathbb{E}\int_{\tau_n}^{\tau_n+\eta_n}\chi_R(\|u_n\|)\|\Lambda
g(u_n)\|^2_{L_2(U,\mathbb{H}^0)}dt
\nonumber\\\!\!\!\!\!\!\!\!\!\!&&\leq{C}{\eta_n}\rightarrow0,~~\text{as}~\eta_n\rightarrow0.\nonumber
\end{eqnarray}
Thus, \eref{4.46} follows, which implies the tightness of
$\mathcal{L}(u_n)_{n\in\mathbb{N}}$ in
$C([0,T];\mathbb{H}^{1-\theta_2})$.

We also make use of a variation of the BDG's inequality (see, Lemma
2.1 in \cite{FG}) and get that for $\kappa\in[0,1/2)$,
\begin{eqnarray}
\mathbb{E}\left\|\int_0^t\chi_R(\|u_n\|)
g(u_n)dW\right\|_{W^{\kappa,2}([0,T];\mathbb{H}^0)}^2dt\leq\!\!\!\!\!\!\!\!\!\!&&~
C\mathbb{E}\int_0^T\chi_R(\|u_n\|)\|
g(u_n)\|^2_{L_2(U,\mathbb{H}^0)}dt
\nonumber\\\leq\!\!\!\!\!\!\!\!\!\!&&
~C\mathbb{E}\int_0^T\chi_R(\|u_n\|)\| \Lambda
g(u_n)\|^2_{L_2(U,\mathbb{H}^0)}dt
\nonumber\\\leq\!\!\!\!\!\!\!\!\!\!&&~C_{1,T}.\label{4.51}
\end{eqnarray} By \eref{4.52}, we conclude
\begin{eqnarray}
&&\mathbb{E}\left\|u_n(t)-\int_0^t\chi_R(\|u_n\|)
g(u_n)dW\right\|_{W^{1,2}([0,T];\mathbb{H}^{1-\theta_2})}^2
\nonumber\\\leq\!\!\!\!\!\!\!\!&&
C\mathbb{E}\|u_0\|_{L^2}^2+C\mathbb{E}\int_0^T(\|{\Lambda}^{2\theta_2}u_n\|_{1-\theta_2}^2+\chi_R(\|u_n\|)\|B(u_n)\|_{L^2}^2dt
\nonumber\\\leq\!\!\!\!\!\!\!\!&&
C\mathbb{E}\|u_0\|_{L^2}^2+C\mathbb{E}\int_0^T\|u_n\|_{\theta_2+1}^2dt
\nonumber\\\leq\!\!\!\!\!\!\!\!&& C_{2,T}.\label{4.52b}
\end{eqnarray}
$\eref{4.45}$, \eref{4.51} and \eref{4.52b} imply that the laws
$\mathcal{L}(u_n)_{n\in\mathbb{N}}$ are bounded in probability in
$$L^2([0,T];\mathbb{H}^{\theta_2+1})\cap{W^{\kappa,2}([0,T];\mathbb{H}^{1-\theta_2})}.$$
Thus,  by \cite[Theorem 2.1]{FG} we get that
$\mathcal{L}(u_n)_{n\in\mathbb{N}}$ is tight in
$L^2([0,T];\mathbb{H}^1)$. Therefore, there exists a subsequence,
still denoted by $u_n$, such that $\mathcal{L}(u_n)$ converges
weakly in $L^2([0,T];\mathbb{H}^1)\cap
C([0,T];\mathbb{H}^{1-\theta_2})$. The Skorohod's embedding theorem
yields that there exists a stochastic basis $(\Omega^1,
\mathscr{F}^1,\{\mathscr{F}^1_t\}_{t\geq0},\mathbb{P}^1)$ and
$L^2(0,T;\mathbb{H}^1)\cap C([0,T];\mathbb{H}^{1-\theta_2})$-valued
random variables $u^1$ and $u_n^1,n\geq1$ on it, such that $u^1_n$
and $u_n$ have the same law and $u^1_n\rightarrow{u^1}$ in
$L^2([0,T];\mathbb{H}^1)\cap C([0,T];\mathbb{H}^{1-\theta_2})$,
$\mathbb{P}^1$-a.s. For $u_n^1$ we also have \eref{4.45}. This and
the Fatou's lemma imply
\begin{eqnarray*}u^1\in
L^2\big(\Omega^1,L^2([0,T];\mathbb{H}^{\theta_2+1})\cap
L^\infty([0,T];\mathbb{H}^{1})\big).
\end{eqnarray*}

For $n\geq1$, define the $\mathbb{H}_n$-valued process
$$M_n^1(t):=u_n^1(t)-P_nu_0+\int_0^t\left(\nu{\Lambda}^{2\theta_2}u_n^1+\chi_R(\|u_n^1\|)P_nB(u_n^1)\right)ds.$$
Then $M_n^1(t)$ is  a square integrable martingale with respect to
the filtration $\{\mathscr{G}_n^1\}_t=\sigma\{u_n^1(s),s\leq t\}$
with quadratic variation process
$$\langle M_n^1\rangle_t=\int_0^t\chi_R(\|u_n^1\|)^2P_ng(u_n^1(s))g(u_n^1(s))^*P_nds.$$
For $s\leq t\in[0,T]$, bounded continuous functions $\phi$ on $L^2([0,s];\mathbb{H}^1)\cap C([0,s];\mathbb{H}^{1-\theta_2})$
and $v\in\cap_{l=1}^\infty\mathbb{H}^{l}$, we have
\begin{eqnarray}{\mathbb{E}^1}(\langle
M_n^1(t)-M_n^1(s),v\rangle)\phi(u_n^1|_{[0,s]}))=0\label{4.53}
\end{eqnarray}
and
\begin{eqnarray}{\mathbb{E}^1}\left[\left(\langle M_n^1(t),v\rangle^2-\langle M_n^1(s),v\rangle^2-
\int_s^t\chi_R(\|u_n^1\|)^2)\|g(u_n^1)^*P_nv\|_U^2dr\right)\phi(u_n^1|_{[0,s]})\right]=0.\label{4.54}
\end{eqnarray}
In order to take the limit in \eref{4.53} as $n\rightarrow\infty$,
we estimate
\begin{eqnarray}&&\int_0^t|\langle\chi_R(\|u_n^1\|)P_nB(u_n^1),v\rangle-\langle\chi_R(\|u^1\|)B(u^1),v\rangle|ds\nonumber\\
\leq\!\!\!\!\!\!\!\!&& \int_0^t|\langle\chi_R(\|u_n^1\|)(B(u_n^1)-B(u^1)),P_nv\rangle|ds+\int_0^t|\langle\chi_R(\|u_n^1\|)B(u^1),(I-P_n)v\rangle|ds\nonumber\\
\!\!\!\!\!\!\!\!\!&&+\int_0^t|\langle(\chi_R(\|u_n^1\|)-\chi_R(\|u^1\|))B(u^1),v\rangle|ds\nonumber\\
=:\!\!\!\!\!\!\!\!&& I_1+I_2+I_3.\label{4.54a}
\end{eqnarray}
For $I_1$, by the bilinearity of $B$, we have
$$B(u_n^1)-B(u^1)=B(Gu_n^1-Gu^1,u_n^1)+B(Gu^1,u_n^1-u^1),$$
which together with Lemma 2.2 implies that
\begin{eqnarray}I_1\!\!\!\!\!\!\!\!&&\leq C\|v\|
\int_0^t(\|u_n^1\|+\|u^1\|)\|u_n^1-u^1\|ds
\nonumber\\\!\!\!\!\!\!\!\!&&\leq{C}\|v\|
\left(\int_0^t\|u_n^1\|^2+\|u^1\|^2ds\right)^{1/2}\left(\int_0^t\|u_n^1-u^1\|^2ds\right)^{1/2}
\nonumber\\\!\!\!\!\!\!\!\!&&\rightarrow0,~\text{for}~\mathbb{P}^1{\text -a.e.}~\omega\in\Omega^1.\label{4.54b}
\end{eqnarray}
For $I_2$, we have
\begin{eqnarray}I_2\leq{C}\|(I-P_n)v\|\int_0^t\|u^1\|^2ds\rightarrow0,~\text{for}~\mathbb{P}^1{\text -a.e.}~\omega\in\Omega^1.\label{4.54c}
\end{eqnarray}
By the dominated convergence theorem, we can get the last term
$I_3\rightarrow0,$ for almost every $\omega\in\Omega^1$. Thus,
combining \eref{4.54a}-\eref{4.54c}, we infer that for almost every
$\omega\in\Omega^1$
\begin{eqnarray}
\int_0^t\langle\chi_R(\|u_n^1\|)P_nB(u_n^1),v\rangle\rightarrow
\int_0^t\langle\chi_R(\|u^1\|)B(u^1),v\rangle ds.\label{4.54d}
\end{eqnarray}
Applying the BDG's inequality, we get that for any $p\geq2$
\begin{eqnarray}\sup_n\mathbb{E}^1|\langle M_n^1(t),v\rangle|^{2p}\leq C\sup_n\mathbb{E}^1\left(
\int_0^t\chi_R(\|u_n^1\|)^2\|g(u_n^1)^*P_nv\|_U^2dr\right)^p<\infty.\label{4.53a}
\end{eqnarray}
Note that the convergence for the linear term is direct, by
\eref{4.54d} and \eref{4.53a} we can get that for
$v\in\cap_{l=1}^\infty\mathbb{H}^{l}$,
$$\lim_{n\rightarrow\infty}\mathbb{E}^1|\langle M_n^1(t)-M^1(s),v\rangle|=0$$
and
$$\lim_{n\rightarrow\infty}\mathbb{E}^1|\langle M_n^1(t)-M^1(s),v\rangle|^2=0,$$
where
$$M^1(t):=u^1(t)-u^1(0)+\int_0^t\left(\nu{\Lambda}^{2\theta_2}u^1+\chi_R(\|u^1\|)B(u^1)\right)ds.$$

Taking the limit in \eref{4.53} and \eref{4.54}, we derive that for
 $s\leq t\in[0,T]$, bounded continuous functions $\phi$ on
$L^2([0,s];\mathbb{H}^1)\cap C([0,s];\mathbb{H}^{1-\theta_2})$ and
$v\in\cap_{l=1}^\infty\mathbb{H}^{l}$, we have
\begin{eqnarray}{\mathbb{E}^1}(\langle
M^1(t)-M^1(s),v\rangle)\phi(u^1|_{[0,s]}))=0\nonumber
\end{eqnarray}
and
\begin{eqnarray}{\mathbb{E}^1}\left(\left(\langle M^1(t),v\rangle^2-\langle M^1(s),v\rangle^2-
\int_s^t\chi_R(\|u^1\|)^2\|g(u^1)^*v\|_U^2dr\right)\phi(u^1|_{[0,s]})\right)=0.\nonumber
\end{eqnarray}
Thus, according to the martingale representation theorem (cf.
\cite[Theorem 8.2]{DZ}), there exists a stochastic basis
$(\tilde{\Omega},
\tilde{\mathscr{F}},\{\tilde{\mathscr{F}}_t\}_{t\geq0},
\tilde{\mathbb{P}})$, a cylindrical Wiener process $\tilde{W}$ and
an $\tilde{\mathscr{F}}_t$-adapted process $\tilde{u}$ with path in
$L^2([0,T];\mathbb{H}^1)\cap C([0,T];\mathbb{H}^{1-\theta_2})$ such
that $\tilde{u}$ satisfies \eref{4.41} with $W$ replaced by
$\tilde{W}$ and $\tilde{u}_0$ has the same distribution as $u_0$.
For $\tilde{u}$ we also have \eref{4.45}, hence, it follows that
\begin{eqnarray}\tilde{u}\in
L^2\big(\tilde{\Omega},L^2([0,T];\mathbb{H}^{\theta_2+1})\cap
L^\infty([0,T];\mathbb{H}^{1})\big).\label{4.58}
\end{eqnarray}

Now we want to show that $\tilde{u}\in C([0,T];\mathbb{H}^1)$ a.s.,
which is needed in order to justify the following stopping time
\eref{4.57b} is well defined. To this end we define
\begin{eqnarray}
dz+\nu\Lambda^{2\theta_2}z=\chi_R(\|\tilde{u}\|)g(\tilde{u})d\tilde{W},~~~z(0)=0.\label{4.55}
\end{eqnarray}
Since $\chi_R(\|\tilde{u}\|)g(\tilde{u})\in
L^2(\tilde{\Omega},L^2([0,T],L_2(U,\mathbb{H}^1)))$, we have
\begin{eqnarray}z\in L^2(\tilde{\Omega},C([0,T];\mathbb{H}^1))\cap L^2(\tilde{\Omega},L^2([0,T];\mathbb{H}^{1+\theta_2})).
\label{4.56}\end{eqnarray} Take $\bar{u}=\tilde{u}-z$. Subtracting
\eref{4.55} from \eref{4.41}, then we see that $\bar{u}$ solves
\begin{eqnarray}
\left\{ \begin{aligned}
&\frac{d\bar{u}}{dt}+\nu\Lambda^{2\theta_2}\bar{u}+\chi_R(\|\bar{u}+z\|)B(\bar{u}+z)=0,\\
&\bar{u}(0)=\tilde{u}_0,
\end{aligned} \right.\label{4.57}
\end{eqnarray}
which is a (pathwise) deterministic PDE.

Due to \eref{4.58} and \eref{4.56}, we infer that $\bar{u}\in
L^2\big(\tilde{\Omega},L^2([0,T];\mathbb{H}^{\theta_2+1})\cap
L^\infty([0,T];\mathbb{H}^{1})\big)$. Hence, we have
$$\Lambda^{2\theta_2}\bar{u},\chi_R(\|\bar{u}+z\|)B(\bar{u}+z)\in L^2(\tilde{\Omega},L^2([0,T];\mathbb{H}^{1-\theta_2})).$$
We conclude with \eref{4.57} that
$$\frac{d\bar{u}}{dt}\in L^2(\tilde{\Omega},L^2([0,T];\mathbb{H}^{1-\theta_2})),~~~\bar{u}\in L^2(\tilde{\Omega},L^2([0,T];\mathbb{H}^{\theta_2+1})).$$
Applying the strong continuity result (see \cite[Chaper 3, Lemma
1.2]{RT} or \cite[Lemma 6]{BF}), we infer that $\bar{u}\in
C([0,T;\mathbb{H}^1)$, a.s. This and \eref{4.56} imply that
$\tilde{u}\in C([0,T];\mathbb{H}^1)$ a.s.

Define the stopping time
\begin{eqnarray}\tau_R:=\inf\{t\geq0;\|\tilde{u}(t)\|\geq
R\}.\label{4.57b}
\end{eqnarray}
Then $(\tilde{u},\tau_R)$ is a local weak solution of \eref{spde}
such that $\tilde{u}(\cdot\wedge\tau_R)\in
C([0,\infty);\mathbb{H}^1)$ a.s., and
$\tilde{u}(\cdot\wedge\tau_R)\in
L^2(\tilde{\Omega},L^2_{loc}([0,\infty),\mathbb{H}^{\theta_2+1}))$.

To complete the proof of the case, it remains to prove the pathwise
uniqueness and apply the Yamada-Watanable theorem (cf \cite[Threorem
3.14]{K}). These technical details are similar to the arguments in
\cite{RZZ1}, so we omit further details.

Now the proof of Theorem 4.2 is completed.\hspace{\fill}$\Box$

\subsection{Global existence}
In the deterministic case ($g\equiv0$), the global solutions of
Leray-regularized equations with fractional dissipation have been
intensively investigated (cf. \cite{A,BMR,BF,OT,P,Y}). In
particular, Barbato, Morandin and Romito in \cite{BMR} showed the
existence of a smooth global solution to the 3D Leray-$\alpha$ model
with the logarithmical dissipation when $\theta_1+\theta_2 \geq
\frac{5}{4}$ (with $\theta_1,\theta_2\geq0$). The result in
\cite{BMR} was obtained by analyzing energy dispersion over dyadic
shells, which is a completely different approach with the one
employed in this work.


The existence of the noise perturbation makes the problem more
interesting and challenging. In this section, inspired by
\cite{DGTZ,GZ1}, we show that the strong solution of \eref{spde} is
global when $\theta_1\ge 0$, $\theta_2 > 0$ and $\theta_1+\theta_2
\geq \frac{5}{4}$. By Theorem 4.2, let
$(u,(\tau_R)_{R\in{\mathbb{N}}},\xi)$ be a maximal strong solution,
if $\xi(\omega)<\infty$ for $\omega\in\Omega$, then the
$\mathbb{H}^{1}$ norm of the solution must blow up at this maximal
time as expressed by \eref{4.3c}. Next lemma establishes some
estimates for the solution of \eref{spde}, which do not depend on
$\tau_R$ and are useful for the proof of global existence.

\begin{lemma}
Under the assumptions of Theorem 4.1, if
$(u,(\tau_R)_{R\in{\mathbb{N}}},\xi)$ is a maximal strong solution
of \eref{spde}, then for any $t>0$
\begin{eqnarray}{\mathbb{E}}\left(\sup_{0\leq s\leq\xi\wedge{t}}\|u(s)\|_{L^2}^2+\int_0^{\xi\wedge{t}}\|u(s)\|_{\theta_2}^2ds\right)<\infty.\label{4.6a}
\end{eqnarray}
\end{lemma}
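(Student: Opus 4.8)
The plan is to run an energy estimate at the $\mathbb{H}^0$ level, where the crucial cancellation $\langle B(u),u\rangle=0$ is available, and then pass to the limit $R\to\infty$. Because $(u,\tau_R)$ is a local strong solution, $u(\cdot\wedge\tau_R)$ lies in $L^2_{loc}([0,\infty);\mathbb{H}^{\theta_2+1})\cap C([0,\infty);\mathbb{H}^1)$ and satisfies $\|u(s)\|\le R$ for $s\le\tau_R$, so all the pairings below are genuine $L^2$ inner products and, for each fixed $R$, every quantity is a priori finite. I would apply It\^o's formula to $\|u(t\wedge\tau_R)\|_{L^2}^2$ in the Gelfand triple $\mathbb{H}^{\theta_2}\subset\mathbb{H}^0\subset\mathbb{H}^{-\theta_2}$, using $\langle\Lambda^{2\theta_2}u,u\rangle=\|u\|_{\theta_2}^2$, to obtain
\begin{eqnarray*}
\|u(t\wedge\tau_R)\|_{L^2}^2+2\nu\int_0^{t\wedge\tau_R}\!\|u(s)\|_{\theta_2}^2ds
&=&\|u_0\|_{L^2}^2-2\int_0^{t\wedge\tau_R}\!\langle B(u),u\rangle\,ds\\
&&+\int_0^{t\wedge\tau_R}\!\|g(u)\|_{L_2(U,\mathbb{H}^0)}^2ds
+2\int_0^{t\wedge\tau_R}\!\langle u,g(u)dW\rangle.
\end{eqnarray*}

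The key simplification is that the nonlinear term vanishes: since $B(u)=B(Gu,u)$ and $Gu$ is divergence free, Lemma 2.2 gives $\langle B(Gu,u),u\rangle=0$. This is precisely the cancellation that is unavailable at the $\mathbb{H}^1$ level, cf. \eref{4.10d}, and it is what makes the present $\mathbb{H}^0$ estimate clean. I would then control the It\^o correction by the first bound in Hypothesis (4.2), namely $\|g(u)\|_{L_2(U,\mathbb{H}^0)}^2\le C(1+\|u\|_{L^2}^2)$. Taking the supremum over $[0,t\wedge\tau_R]$ and expectations, the stochastic integral is handled by the Burkholder--Davis--Gundy inequality,
\begin{eqnarray*}
\mathbb{E}\sup_{s\le t\wedge\tau_R}\Big|\int_0^s\!\langle u,g(u)dW\rangle\Big|
\le C\,\mathbb{E}\Big(\int_0^{t\wedge\tau_R}\!\|u\|_{L^2}^2\,\|g(u)\|_{L_2(U,\mathbb{H}^0)}^2\,ds\Big)^{1/2},
\end{eqnarray*}
after which, inserting the growth bound and applying Young's inequality, the resulting $\tfrac12\,\mathbb{E}\sup_{s\le t\wedge\tau_R}\|u(s)\|_{L^2}^2$ is absorbed into the left-hand side.

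Setting $\phi(t):=\mathbb{E}\sup_{s\le t\wedge\tau_R}\|u(s)\|_{L^2}^2$, the absorption leaves an inequality of the form
\begin{eqnarray*}
\phi(t)+\nu\,\mathbb{E}\int_0^{t\wedge\tau_R}\!\|u(s)\|_{\theta_2}^2ds
\le C\big(\mathbb{E}\|u_0\|_{L^2}^2+1+t\big)+C\int_0^t\phi(s)\,ds,
\end{eqnarray*}
and Gronwall's lemma yields a bound of order $C(\mathbb{E}\|u_0\|_{L^2}^2+1+t)e^{Ct}$ that is \emph{independent of $R$}. Since $\tau_R\uparrow\xi$ as $R\to\infty$, the monotone convergence theorem for the dissipation integral together with Fatou's lemma for the supremum term lets me pass to the limit and conclude \eref{4.6a}. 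The main obstacle here is organizational rather than analytic: one must carry out the entire estimate up to $\tau_R$—where the finiteness of $\mathbb{E}\sup\|u\|_{L^2}^2$ is guaranteed a priori by $\|u\|\le R$, legitimizing both the BDG step and the Young absorption—while keeping every constant in the final inequality free of $R$, so that the $R\to\infty$ passage delivers a bound valid up to the possibly finite blow-up time $\xi$.
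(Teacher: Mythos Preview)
Your proposal is correct and follows essentially the same route as the paper's proof: apply It\^o's formula to $\|u\|_{L^2}^2$ on $[0,\tau_R\wedge t]$, use the cancellation $\langle B(u),u\rangle=0$ together with Hypothesis~(4.2), handle the martingale term via BDG and Young to absorb $\tfrac12\,\mathbb{E}\sup\|u\|_{L^2}^2$, apply Gronwall to obtain a bound independent of $R$, and then let $R\to\infty$. The only cosmetic difference is that the paper invokes monotone convergence for both terms (since $\tau_R$ is increasing, $\sup_{s\le\tau_R\wedge t}\|u(s)\|_{L^2}^2$ is also increasing in $R$), whereas you mention Fatou for the supremum; either works.
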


\begin{proof}
Taking It\^{o}'s formula, we obtain that, for any $t>0$,
$s\in[0,\tau_R\wedge{t}]$
\begin{eqnarray}
\|u(s)\|_{L^2}^2=\!\!\!\!\!\!\!\!&&\|u_0\|_{L^2}^2-2\int_0^s\left(\langle\nu{\Lambda}^{2\theta_2}u(r)+B(u(r)),u(r)\rangle
\right)dr\nonumber\\&&+\int_0^s
\|{g}(u(r))\|_{L_2(U,\mathbb{H}^0)}^2dr+2\int_0^s\langle
{g}(u(r)d{W}(r),u(r)\rangle.\label{4.7}
\end{eqnarray}

By the Hypothesis (4.2) and the fact that $\langle B(u),u\rangle=0
$, we have
\begin{eqnarray}
&&\|u(s)\|_{L^2}^2+\int_0^s\|u(r)\|_{\theta_2}^2dr\nonumber\\\leq
\!\!\!\!\!\!\!\!&&{C}+\|u_0\|_{L^2}^2+C\int_0^s\|u(r)\|_{L^2}^2dr
+2\int_0^s\langle {g}(u(r))d{W}(r),u(r)\rangle.\label{4.8a}
\end{eqnarray}
The BDG's inequality and the Young's inequality yield
\begin{eqnarray}
&&{\mathbb{E}}\sup_{s\in[0,\tau_R\wedge{t}]}\left|\int_0^s\langle
{g}(u(r))d{W}(r),u(r)\rangle\right|\nonumber\\
\leq\!\!\!\!\!\!\!\!&& 3{\mathbb{E}}\left(\int_0^{\tau_R\wedge{t}}\|u(r)\|_{L^2}^2\|{g}(u(r))\|_{L_2(U,\mathbb{H}^0)}^2dr\right)^{1/2}\nonumber\\
\leq\!\!\!\!\!\!\!\!&&
3{\mathbb{E}}\left(\sup_{s\in[0,\tau_R\wedge{t}]}\|u(s)\|_{L^2}^2\cdot{C}\int_0^{\tau_R\wedge{t}}(1+\|u(s)\|_{L^2}^2)ds
\right)^{1/2}\nonumber\\
\leq\!\!\!\!\!\!\!\!&&
\varepsilon\mathbb{E}\sup_{s\in[0,\tau_R\wedge{t}]}\|u(s)\|_{L^2}^2+C_{\varepsilon}
{\mathbb{E}}\int_0^{\tau_R\wedge{t}}(1+\|u(s)\|_{L^2}^2)ds,\label{4.9a}
\end{eqnarray}
where $\varepsilon>0$ is a small constant.

By \eref{4.8a} and \eref{4.9a}, one deduces
\begin{eqnarray}
&&{\mathbb{E}}\sup_{s\in[0,\tau_R\wedge{t}]}\|u(s)\|_{L^2}^2+{\mathbb{E}}\int_0^{\tau_R\wedge{t}}\|u(s)\|_{\theta_2}^2ds\nonumber\\
\leq\!\!\!\!\!\!\!\!&&
C+C{\mathbb{E}}\|u_0\|_{L^2}^2+{\mathbb{E}}\int_0^{\tau_R\wedge{t}}\|u(s)\|_{L^2}^2ds.\label{4.7a}
\end{eqnarray}
 The Gronwall's lemma implies that
\begin{eqnarray}{\mathbb{E}}\sup_{0\leq s\leq \tau_R\wedge{t}}\|u(s)\|_{L^2}^2+{\mathbb{E}}\int_0^{\tau_R\wedge{t}}\|u(s)\|_{\theta_2}^2ds\leq C(1+{\mathbb{E}}\|u_0\|_{L^2}^2),\label{4.6c}
\end{eqnarray}
where $C=C(u_0,T)$ is a constant independent of $R$.

Thus \eref{4.6a} can be proved by the monotone convergence theorem
as
 $R\rightarrow\infty$. The proof is completed. \hspace{\fill}$\Box$
\end{proof}

\subsection*{ Proof of Theorem 4.1:}

For any $K,M>0$ we define the following stopping times

$$\rho_{M}:=\inf_{t\geq0}\left\{\sup_{s\in[0,t]}\|u(s)\|^2+\int_0^t\|u(s)\|_{\theta_2+1}^{2}ds\geq{M}\right\}\wedge\xi$$
and
$$\gamma_K:=\inf_{t\geq0}\left\{\int_0^{t\wedge\xi}\|u(s)\|^2_{\theta_2}ds\geq K\right\}.$$
 Here we take $\inf\emptyset=\infty$. It is clear that $(u,\rho_{M})$ is a local strong solution.

Applying $\Lambda$ to equation \eref{spde} and taking It\^{o}'s
formula, for any $t>0$, $s\in[0,\gamma_K\wedge\rho_M\wedge{t}]$, we
have
\begin{eqnarray}
&&\|u(s)\|^2+2\nu\int_0^s\|u(r)\|_{\theta_2+1}^{2}dr\nonumber\\
=\!\!\!\!\!\!\!\!&&\|u_0\|^2 -2\int_0^s\langle
\Lambda{B(u(r))},\Lambda{u}(r)\rangle dr
+\int_0^s\|\Lambda{g}(u(r))\|_{L_2(U,\mathbb{H}^0)}^2dr
\nonumber\\&&+2\int_0^s\langle\Lambda{{g}(u(r))}d{W}(r),\Lambda{u}(r)\rangle.\label{4.11}
\end{eqnarray}
For any stopping times
$0\leq\tau_a\leq\tau_b\leq\gamma_K\wedge\rho_M\wedge{t}$, taking a
supremum over the interval $[{\tau_a},{\tau_b}]$ and then taking
expectation with respect to the resulting expression, we deduce that
\begin{eqnarray}
&&{\mathbb{E}}\left(\sup_{s\in[{\tau_a},{\tau_b}]}\|u(s)\|^2+\nu\int_{\tau_a}^{\tau_{b}}\|u(r)\|_{\theta_2+1}^{2}dr\right)\nonumber\\
\leq\!\!\!\!\!\!\!\!\!&&
C_0\mathbb{E}\left(\|u(\tau_a)\|^2+\int_{\tau_a}^{\tau_{b}}\left(|\langle
\Lambda{B(u(r))},\Lambda{u}(r)\rangle|+(1+\|{u}(r)\|^2)\right)dr\right)\nonumber\\
&&+C_0
{\mathbb{E}}\sup_{s\in[{\tau_a},{\tau_b}]}\left|\int_{\tau_a}^s\langle\Lambda{g}(u(r))d{W}(r),{\Lambda}u(r)\rangle\right|,
\label{4.8f}
\end{eqnarray}
where we use the Hypothesis (4.2), and $C_0$ is a constant
independent of $\tau_a$ and $\tau_b$.

Similar as the proof of Theorem 4.2, by the commutator lemma, we
have
\begin{eqnarray}
|\langle\Lambda{B(u)},\Lambda{u}\rangle|\leq
C(\|\Lambda{Gu}\|_{L^{p_1}}\|\Lambda{u}\|_{L^{p_2}})\|\Lambda{u}\|_{L^2},\nonumber
\end{eqnarray}
where $p_1$ and $p_2$ are given by

$$\left\{
                  \begin{array}{ll}
              \frac{1}{p_1}=\frac{\theta_2}{3},\frac{1}{p_2}=\frac{1}{2}-\frac{\theta_2}{3} & \text{if}~~0<\theta_2<\frac{3}{2}; \\
                  p_1=6,~~ p_2=3 & \text{if}~~\theta_2\geq\frac{3}{2}.
                  \end{array}
                \right.
$$

 Thanks to the
fact $\theta_1+\theta_2\geq\frac{5}{4}$, by Lemmas 2.1 and 2.3, it
is not difficult to get that for $0<\theta_2<\frac{3}{2}$,
$$\|\Lambda{u}\|_{L^{p_2}}\leq C\|u\|_{\theta_2+1},~~~~\|\Lambda{Gu}\|_{L^{p_1}}\leq{C}\|Gu\|_{\frac{5}{2}-\theta_2}\leq
C\|u\|_{\theta_2}.$$ Similarly, for $\theta_2\geq\frac{3}{2}$ we
also have
$$\|\Lambda{u}\|_{L^{p_2}}\leq C\|u\|_{\frac{3}{2}}\leq
C\|u\|_{\theta_2},~~~~\|\Lambda{Gu}\|_{L^{p_1}}\leq{C}\|Gu\|_{2}\leq
C\|u\|_{\theta_2+1}.$$

Therefore, putting the above estimates all together and applying the
Young's inequality, it leads to
\begin{eqnarray}
|\langle\Lambda{B(u)},\Lambda{u}\rangle|\leq{C}\|u\|_{\theta_2}\|u\|\|u\|_{\theta_2+1}
\leq\varepsilon\|u\|_{\theta_2+1}^{2}+C_{\varepsilon}\|u\|_{\theta_2}^{2}\|u\|^2,\label{4.11a}
\end{eqnarray}
where $\varepsilon>0$ is a small constant.

 By the BDG's inequality and
Young's inequality, we have
\begin{eqnarray}
&&{\mathbb{E}}\sup_{s\in[\tau_a,\tau_{b}]}\left|\int_{\tau_a}^s\langle\Lambda{g}(u(r))d{W}(r),{\Lambda}u(r)\rangle\right|\nonumber\\
\leq\!\!\!\!\!\!\!\!&&3{\mathbb{E}}\left(\int_{\tau_a}^{\tau_{b}}\|u(s)\|^{2}\|\Lambda{g}(u(s))\|_{L_2(U,\mathbb{H}^0)}^2ds
\right)^{1/2}\nonumber\\
\leq\!\!\!\!\!\!\!\!&&3{\mathbb{E}}\left(\sup_{s\in[{\tau_a},{\tau_b}]}\|u(s)\|^{2}\cdot{C}\int_{\tau_a}^{\tau_{b}}(1+\|u(s)\|^2)ds
\right)^{1/2}\nonumber\\
\leq\!\!\!\!\!\!\!\!&&\varepsilon{\mathbb{E}}\sup_{s\in[{\tau_a},{\tau_b}]}\|u(s)\|^2+C_{\varepsilon}
{\mathbb{E}}\int_{\tau_a}^{\tau_b}(1+\|u(s)\|^2)ds.\label{4.9c}
\end{eqnarray}
 Combining the estimates
\eref{4.8f}-\eref{4.9c}, we conclude that
\begin{eqnarray}
&&{\mathbb{E}}\left(\sup_{s\in[{\tau_a},{\tau_b}]}\|u(s)\|^2+\int_{\tau_a}^{\tau_{b}}\|u(r)\|_{\theta_2+1}^{2}dr\right)\nonumber\\
\leq\!\!\!\!\!\!\!\!&&
C\mathbb{E}\left(\|u(\tau_a)\|^2+\int_{\tau_a}^{\tau_{b}}(1+\|u(r)\|_{\theta_2}^{2})\|u(r)\|^2dr\right),
\label{4.8c}
\end{eqnarray}
where $C$ is independent of $\tau_a$ and $\tau_b$. Now, we can apply
the stochastic version Gronwall's lemma from \cite[Lemma 5.3]{GZ1},
which is recalled in the appendix as Lemma 5.1. Note that by
definition of $\gamma_k$,
$$\int_0^{\gamma_k}\|u\|_{\theta_2}^{2}ds\leq K,\ \ \text{a.s.}$$
Taking $X:=\|u\|^2$, $Y:=\|u\|_{\theta_2+1}^2$,
$R:=1+\|u\|_{\theta_2}^{2}$, $Z:=0$ and
$\tau:=\gamma_K\wedge\rho_M\wedge{t}$ in Lemma 5.1, we therefore get
that
\begin{eqnarray}
{\mathbb{E}}\left(\sup_{0\leq{s}\leq\gamma_K\wedge\rho_M\wedge{t}}\|u(s)\|^2+\int_0^{\gamma_K\wedge\rho_M\wedge{t}}\|u(s)\|_{\theta_2+1}^{2}ds\right)
\leq C_{c_0,t,K}(1+\mathbb{E}\|u_0\|^2),\label{4.10}
\end{eqnarray}
where  $C_{c_0,t,K}$ is a constant independent of $M$. Then we have
that, for any $t>0$
\begin{eqnarray}
&&\mathbb{P}(\rho_M\leq t)\nonumber\\
\leq\!\!\!\!\!\!\!\!&&\mathbb{P}\left(\{\rho_M\leq t\}\cap\{\gamma_K>t\}\right)+\mathbb{P}(\gamma_K\leq t)\nonumber\\
\leq\!\!\!\!\!\!\!\!&&\mathbb{P}\left(\left\{\sup_{0\leq{s}\leq\rho_M\wedge{t}}\|u(s)\|^2
+\int_0^{\rho_M\wedge{t}}\|u(s)\|_{\theta_2+1}^{2}ds\geq M\right\}\cap\{\gamma_K>t\}\right)+\mathbb{P}(\gamma_K\leq t)\nonumber\\
\leq\!\!\!\!\!\!\!\!&&\mathbb{P}\left(\sup_{0\leq{s}\leq\gamma_K\wedge\rho_M\wedge{t}}\|u(s)\|^2
+\int_0^{\gamma_K\wedge\rho_M\wedge{t}}\|u(s)\|_{\theta_2+1}^{2}ds\geq M\right )+\mathbb{P}(\gamma_K\leq t)\nonumber\\
\leq\!\!\!\!\!\!\!\!&&\frac{1}{M}\mathbb{E}\left(\sup_{0\leq{s}\leq\gamma_K\wedge\rho_M\wedge{t}}\|u(s)\|^2
+\int_0^{\gamma_K\wedge\rho_M\wedge{t}}\|u(s)\|_{\theta_2+1}^{2}ds\right
)+\mathbb{P}(\gamma_K\leq
t)\nonumber\\
\leq\!\!\!\!\!\!\!\!&&\frac{C_{c_0,t,K}(1+\mathbb{E}\|u_0\|^2)}{M}+\mathbb{P}(\gamma_K\leq
t).\nonumber
\end{eqnarray}
Thus, for any fixed $K>0$,
$$\lim_{M\rightarrow\infty}\mathbb{P}(\rho_M\leq
t)\leq\mathbb{P}(\gamma_K\leq t).$$
By Lemma 4.1 we obtain that
$$\mathbb{P}(\gamma_K\leq t)\leq\mathbb{P}\left(\int_0^{t\wedge\xi}\|u(s)\|^2_{\theta_2}ds\geq K\right)\leq\frac{1}{K}\mathbb{E}\left(
\int_0^{t\wedge\xi}\|u(s)\|_{\theta_2}^{2}ds\right),$$ which goes to
zero as $K\rightarrow\infty$. Hence, we get that for any $t>0$,
$$\lim_{M\rightarrow\infty}\mathbb{P}(\rho_M\leq t)=0.$$

Let $M\rightarrow\infty$ in \eref{4.10}, by the monotone convergence
theorem, we can get that for any $t>0$,
\begin{eqnarray}
{\mathbb{E}}\left(\sup_{0\leq{s}\leq\gamma_K\wedge{t}}\|u(s)\|^2+\int_0^{\gamma_K\wedge{t}}\|u(s)\|_{\theta_2+1}^{2}ds\right)
\leq C_{c_0,t,K}(1+\mathbb{E}\|u_0\|^2).\label{4.10a}
\end{eqnarray}

We now want to show that for any $K>0$, $\gamma_K\leq\xi$ a.s.
Suppose $\mathbb{P}(\gamma_K>\xi)>0$. Denoted by $\mathbb{Q}^+$ the
set of negative rational numbers, we have
$\{\gamma_K>\xi\}=\bigcup_{t\in\mathbb
Q^+}\{\gamma_K\wedge{t}>\xi\}$. Hence there is a
$t_0\in\mathbb{Q}^+$ such that
$\mathbb{P}(\gamma_K\wedge{t_0}>\xi)>0$. By the definition of $\xi$
(see \eref{4.3c}), we would infer that
$$\sup_{0\leq{s}\leq\gamma_K\wedge{t_0}}\|u(s)\|^2+\int_0^{\gamma_K\wedge{t_0}}\|u(s)\|_{\theta_2+1}^{2}ds\geq\sup_{0\leq{s}
\leq\xi}\|u(s)\|^2=\infty$$ on $\{\gamma_K\wedge{t_0}>\xi\}$. Hence,
we have a contradiction with \eref{4.10a}. This  means that
$\gamma_K\leq\xi$ a.s. for any $K>0$.

According to Lemma 4.1, it is obvious that
$$\lim_{K\rightarrow\infty}\gamma_K=\infty~~~\text{a.s.}$$
Consequently, $\xi=\infty$ a.s., and the solution $u$ is global in
the sense of definition 4.1. We complete the proof of Theorem 4.1.
\hspace{\fill}$\Box$

\subsection{Moments of the solution}
As a continuation of Remark 4.3, we establish some
moment estimates of the solution for \eref{spde}  in this section, which do not
depend on any stopping times.

\begin{theorem}
Under the assumptions of Theorem 4.1, and suppose $u_0\in
L^p(\Omega,\mathbb{H}^0)$ for some $p\geq2$. Then for any $T>0$ we
have
\begin{eqnarray}{\mathbb{E}}\left(\sup_{0\leq t\leq T}\|u(t)\|_{L^2}^p+\int_0^T\|u(t)\|_{L^2}^{p-2}\|u(t)\|_{\theta_2}^2dt\right)
\leq C_T(1+{\mathbb{E}}\|u_0\|_{L^2}^p).\label{5.6a}
\end{eqnarray}
\end{theorem}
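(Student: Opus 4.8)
The plan is to run the $L^2$-energy balance from Lemma 4.1 at the level of the $p$-th power, exploiting the crucial cancellation $\langle B(u),u\rangle=0$ from Lemma 2.2. Because this cancellation kills the nonlinear term in the $\mathbb{H}^0$ balance, the estimate is driven solely by the dissipation, the growth of $g$, and the martingale part; this is precisely the feature that fails at the $\mathbb{H}^1$ level in Section 4.2 and forces the commutator estimates there. Concretely, I would apply the real-variable It\^o formula to the convex function $x\mapsto x^{p/2}$ evaluated at the nonnegative continuous semimartingale $X_t:=\|u(t)\|_{L^2}^2$, whose dynamics are given by \eref{4.7}. Since a priori only $\mathbb{E}\sup\|u\|_{L^2}^2$ is under control (via Lemma 4.1 together with the global existence $\xi=\infty$ of Theorem 4.1), I would first localize with the stopping times $\tau_n:=\inf\{t\ge0:\|u(t)\|_{L^2}\ge n\}$, which increase to $\infty$ a.s. because $u\in C([0,\infty);\mathbb{H}^1)$, and carry out all computations on $[0,\tau_n\wedge T]$.

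Using $\langle B(u),u\rangle=0$ in \eref{4.7}, the process $X_t$ obeys $dX_t=(-2\nu\|u\|_{\theta_2}^2+\|g(u)\|_{L_2(U,\mathbb{H}^0)}^2)\,dt+2\langle g(u)\,dW,u\rangle$ with quadratic variation $d\langle X\rangle_t=4\sum_k\langle g(u)e_k,u\rangle^2\,dt$ for an orthonormal basis $(e_k)$ of $U$. It\^o's formula for $x^{p/2}$ then yields
\begin{align*}
d\|u\|_{L^2}^p={}&\tfrac{p}{2}\|u\|_{L^2}^{p-2}\big(-2\nu\|u\|_{\theta_2}^2+\|g(u)\|_{L_2(U,\mathbb{H}^0)}^2\big)\,dt\\
&+\tfrac{p(p-2)}{2}\|u\|_{L^2}^{p-4}\sum_k\langle g(u)e_k,u\rangle^2\,dt+p\|u\|_{L^2}^{p-2}\langle g(u)\,dW,u\rangle.
\end{align*}
Invoking Hypothesis (4.2), namely $\|g(u)\|_{L_2(U,\mathbb{H}^0)}^2\le C(1+\|u\|_{L^2}^2)$, together with the Cauchy--Schwarz bound $\sum_k\langle g(u)e_k,u\rangle^2\le\|u\|_{L^2}^2\|g(u)\|_{L_2(U,\mathbb{H}^0)}^2$, both drift corrections are dominated by $C\|u\|_{L^2}^{p-2}(1+\|u\|_{L^2}^2)\le C(1+\|u\|_{L^2}^p)$, where the last step uses Young's inequality and $p\ge2$.

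After integrating up to $\tau_n\wedge T$, keeping the dissipation term $p\nu\int_0^{\tau_n\wedge T}\|u\|_{L^2}^{p-2}\|u\|_{\theta_2}^2\,ds$ on the left, taking the supremum in $t$ and then expectations, the only delicate term is the stochastic integral. I would estimate it by the Burkholder--Davis--Gundy inequality,
$$\mathbb{E}\sup_{t\le\tau_n\wedge T}\Big|\int_0^t\|u\|_{L^2}^{p-2}\langle g(u)\,dW,u\rangle\Big|\le C\,\mathbb{E}\Big(\int_0^{\tau_n\wedge T}\|u\|_{L^2}^{2p-2}(1+\|u\|_{L^2}^2)\,ds\Big)^{1/2},$$
and then factor $\|u\|_{L^2}^{2p-2}(1+\|u\|_{L^2}^2)=\|u\|_{L^2}^{p}\cdot\|u\|_{L^2}^{p-2}(1+\|u\|_{L^2}^2)$, pull out $\sup_{s}\|u\|_{L^2}^p$, and apply Young's inequality to absorb $\varepsilon\,\mathbb{E}\sup_{t}\|u\|_{L^2}^p$ into the left-hand side. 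This leaves a bound controlled by $\mathbb{E}\int_0^{\tau_n\wedge T}(1+\|u\|_{L^2}^p)\,ds$, and a Gronwall argument---legitimate since the left-hand side is finite, being bounded by $n^p$, with constants independent of $n$---produces \eref{5.6a} with $\tau_n\wedge T$ in place of $T$. Letting $n\to\infty$ via monotone convergence and Fatou's lemma finishes the proof.

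The main obstacle, beyond the localization bookkeeping, is this absorption step: the BDG bound naturally produces $\sup_t\|u\|_{L^2}^p$ under a square root, and it is the precise factorization above that lets it be reabsorbed with the dissipation-free ``energy'' $\|u\|_{L^2}^{p-2}(1+\|u\|_{L^2}^2)$ left over for Gronwall. A minor subtlety is applying It\^o's formula to $x^{p/2}$ near $x=0$ when $2\le p<4$; I would handle this by first working with $(\varepsilon+X_t)^{p/2}$ and letting $\varepsilon\downarrow0$, the correction term being integrable because $\|u\|_{L^2}^{p-4}\sum_k\langle g(u)e_k,u\rangle^2\le C\|u\|_{L^2}^{p-2}(1+\|u\|_{L^2}^2)$ stays bounded near the origin for $p\ge2$.
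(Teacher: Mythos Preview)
Your proposal is correct and follows essentially the same approach the paper intends: the paper's proof of this theorem is the single sentence ``The result can be obtained by using similar arguments as in the proof of Lemma 4.1,'' and your sketch is precisely the $p$-th power version of that argument (It\^o's formula on $\|u\|_{L^2}^2$, the cancellation $\langle B(u),u\rangle=0$, Hypothesis (4.2), BDG plus Young to absorb the martingale term, then Gronwall and a limit in the localizing stopping times). Your treatment is in fact more careful than what the paper writes out---the localization with $\tau_n$, the explicit factorization in the BDG step, and the $(\varepsilon+X_t)^{p/2}$ regularization for $2\le p<4$ are all details the paper leaves implicit (compare also the analogous but more involved computation for $\|u\|^p$ in the paper's proof of Theorem 4.4).
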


\begin{proof}
The result can be obtained by  using similar arguments as in the
proof of Lemma 4.1. \hspace{\fill}$\Box$
\end{proof}

\begin{theorem}
Suppose that $\theta_1\ge 0, \theta_2 > 0$ with $\theta_1+\theta_2 >
\frac{5}{4}$ and $u_0\in L^{mp}(\Omega,\mathbb{H}^0)\cap
L^p(\Omega,\mathbb{H}^1)$
 for some $p\geq2$  with a constant $m=m(\theta_2)>0$ given by
$$\left\{
                  \begin{array}{ll}
              m=1+\frac{1+\theta_2}{2(\theta_1+\theta_2-\frac{5}{4})}, & \text{if}~~\theta_1+\frac{\theta_2}{2}<\frac{5}{4}; \\
                   m=2+\frac{1}{\theta_2}, & \text{if}~~\theta_1+\frac{\theta_2}{2}\geq\frac{5}{4}.
                  \end{array}
                \right.
$$
If $g$ satisfies the Hypotheses (4.1) and (4.2), then for any $T>0$
we have
\begin{eqnarray}
{\mathbb{E}}\left(\sup_{0\leq t\leq T}\|u(t)\| ^p+\int_0^T\|u(t)\|
^{p-2}\|u(t)\|_{\theta_2+1}^2dt\right)\leq
C_{p,T}(1+\mathbb{E}\|u_0\|^p+C{\mathbb{E}}\|u_0\|_{L^2}^{mp}).\label{6.10}
\end
{eqnarray}
\end{theorem}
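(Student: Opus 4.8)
The plan is to upgrade the energy identity \eref{4.11} for $\|u\|^2=\|u\|_{\mathbb{H}^1}^2$ into a closed $p$-th moment bound, the decisive new ingredient being a sharpened estimate of the nonlinear term that exploits the \emph{strict} subcriticality $\theta_1+\theta_2>\frac54$. Set $\delta_0:=2\left(\theta_1+\theta_2-\tfrac54\right)>0$. Retracing the commutator estimate leading to \eref{4.11a}, but keeping track of the surplus regularity, Lemma 2.3 now gives $\|\Lambda Gu\|_{L^{p_1}}\le C\|Gu\|_{\frac52-\theta_2}\le C\|u\|_{\theta_2-\delta_0}$ in place of $C\|u\|_{\theta_2}$, so that for $u\in\mathbb{H}^{\theta_2+1}$
\[
|\langle\Lambda B(u),\Lambda u\rangle|\le C\,\|u\|_{\theta_2-\delta_0}\,\|u\|\,\|u\|_{\theta_2+1}.
\]
When $\theta_1+\frac{\theta_2}{2}\ge\frac54$ the index $\theta_2-\delta_0=\frac52-\theta_2-2\theta_1$ is $\le0$, so $\|u\|_{\theta_2-\delta_0}\le\|u\|_{L^2}$ outright; when $\theta_1+\frac{\theta_2}{2}<\frac54$ it is positive and I interpolate $\|u\|_{\theta_2-\delta_0}$ between $\|u\|_{L^2}$ and $\|u\|_{\theta_2+1}$. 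In either case, interpolating also $\|u\|=\|u\|_1$ between $\|u\|_{L^2}$ and $\|u\|_{\theta_2+1}$ and applying Young's inequality yields
\[
|\langle\Lambda B(u),\Lambda u\rangle|\le \varepsilon\|u\|_{\theta_2+1}^2+C_\varepsilon\|u\|_{L^2}^{2m},
\]
where $m$ is exactly the stated exponent; the bookkeeping is such that the power of $\|u\|_{\theta_2+1}$ after interpolation is strictly below $2$, which is precisely where $\delta_0>0$ is used (at $\delta_0=0$ one gets $\|u\|_{L^2}\|u\|_{\theta_2+1}^2$, a critical term whose coefficient is not small, and Young fails). For $\theta_2\ge\frac32$ one argues identically with the exponents $p_1=6,\ p_2=3$ as in the proof of Theorem 4.2.

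With this in hand I would substitute into \eref{4.11}, bound $\|\Lambda g(u)\|_{L_2(U,\mathbb{H}^0)}^2\le C(1+\|u\|^2)$ by Hypothesis (4.2), and take $\varepsilon=\nu/2$ to absorb the top-order contribution into the left side, obtaining
\[
d\|u\|^2+\nu\|u\|_{\theta_2+1}^2\,dt\le C\bigl(1+\|u\|^2+\|u\|_{L^2}^{2m}\bigr)\,dt+2\langle\Lambda g(u)\,dW,\Lambda u\rangle.
\]
Next I apply It\^o's formula to $(\|u\|^2)^{p/2}$: the second-order correction is controlled through $\|u\|^{p-4}\,\|u\|^2\,\|\Lambda g(u)\|^2\le C(1+\|u\|^p)$, and a Young step converts $\|u\|^{p-2}\|u\|_{L^2}^{2m}$ into $C(\|u\|^p+\|u\|_{L^2}^{mp})$, giving
\[
d\|u\|^p+\tfrac{p\nu}{2}\|u\|^{p-2}\|u\|_{\theta_2+1}^2\,dt\le C\bigl(1+\|u\|^p+\|u\|_{L^2}^{mp}\bigr)\,dt+dM,
\]
with $dM=p\|u\|^{p-2}\langle\Lambda g(u)\,dW,\Lambda u\rangle$.

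To make these manipulations rigorous I would carry them out on $[0,\tau_R\wedge t]$ for the stopping times $\tau_R$ of the maximal solution (Theorem 4.2), where all norms are bounded so It\^o's formula and the martingale property are justified. After integrating, taking the supremum over $[0,\tau_R\wedge t]$, taking expectations, and estimating $dM$ by the Burkholder--Davis--Gundy and Young inequalities exactly as in \eref{4.9c}, the crucial feature is that the right-hand side carries $\mathbb{E}\|u\|_{L^2}^{mp}$ rather than the non-integrable factor $\|u\|_{\theta_2}^2\|u\|^2$ of the critical analysis: by Theorem 4.3 applied with exponent $mp$ (this is precisely where the hypothesis $u_0\in L^{mp}(\Omega,\mathbb{H}^0)$ enters) one has $\sup_{s\le T}\mathbb{E}\|u(s)\|_{L^2}^{mp}\le C_T(1+\mathbb{E}\|u_0\|_{L^2}^{mp})$. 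Gronwall's lemma applied to $\phi_R(t):=\mathbb{E}\sup_{s\le\tau_R\wedge t}\|u(s)\|^p$ then produces a bound independent of $R$; since the solution is global, $\tau_R\uparrow\infty$ a.s. by Theorem 4.1, and the monotone convergence theorem delivers \eref{6.10}.

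The main obstacle is the first paragraph: replacing the coefficient $\|u\|_{\theta_2}^2$ of $\|u\|^2$ in \eref{4.11a} by a pure power of the $L^2$-norm is exactly what is unavailable in the critical case $\theta_1+\theta_2=\frac54$, and it is the strict gain $\delta_0>0$ that simultaneously renders the $\|u\|_{\theta_2+1}$-exponent subcritical (so that Young closes) and pins down the precise value of $m$. Once that estimate is secured, the remainder is a standard stopped-moment plus Gronwall scheme.
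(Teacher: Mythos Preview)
Your proof is correct and follows the same architecture as the paper: a sharpened trilinear estimate exploiting $\theta_1+\theta_2>\tfrac54$, It\^o's formula for $\|u\|^p$, BDG plus stopping, Theorem~4.3 for the $L^2$ moments of order $mp$, Gronwall, and monotone convergence. The only cosmetic difference is in the form of the nonlinear bound: the paper sets $\delta_0:=\{\tfrac52-2\theta_1-\theta_2\}\vee 0$ (your $\theta_2-\delta_0'$) and, via a differently arranged commutator estimate for $\theta_2\le 1$ and Lemma~2.2 for $\theta_2>1$, obtains $|\langle\Lambda B(u),\Lambda u\rangle|\le C\|u\|_{L^2}\|u\|_{\delta_0+1}\|u\|_{\theta_2+1}$ rather than your $C\|u\|_{\delta_0}\|u\|_1\|u\|_{\theta_2+1}$; after interpolation both yield the same $m$.
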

\begin{Rem} By the standard Krylov-Bogolyubov argument, an immediate
corollary of Theorem 4.4 is the existence of an invariant measure
for the associate transition semigroup. The topic of ergodicity for
this model will be investigated in a separate paper.

\end{Rem}
\begin{proof}
By Theorem 4.1, there exists a unique strong global solution for
\eref{spde}. Applying It\^{o}'s formula, we obtain that, for any
$T>0$, $t\in[0,T]$
\begin{eqnarray}
&&\|u(t)\|^p+p\nu\int_0^t\|u(s)\|^{p-2}\|u(s)\|_{\theta_2+1}^{2}ds\nonumber\\
=\!\!\!\!\!\!\!\!&&\|u_0\|^p
-p\int_0^t\|u(s)\|^{p-2}\langle\Lambda{B(u(s))},\Lambda{u}(s)\rangle
ds
\nonumber\\&&+\frac{p}{2}\int_0^t\|u(s)\|^{p-2}\|\Lambda{g(u(s))}\|_{L_2(U,\mathbb{H}^0)}^2ds\nonumber\\
&&+\frac{p(p-2)}{2}\int_0^t\|u(s)\|^{p-4}\|(\Lambda{g}(u(s)))^*\Lambda{u}(s)\|_{L^2}^{2}ds\nonumber\\
&&+p\int_0^t\|u(s)\|^{p-2}\langle\Lambda{g}(u(s))d{W}(s),\Lambda{u}(s)\rangle
.\label{6.11}
\end{eqnarray}
To bound the second term at the right hand side of \eref{6.11}, we
split it into two cases, namely, the case $\theta_2\leq1$ and the
case $\theta_2>1$. For the case $\theta_2\leq1$, thanks to the
commutator estimate, it directly yields
\begin{eqnarray}
|\langle\Lambda{B(u)},\Lambda{u}\rangle|\!\!\!\!\!\!\!\!&&=|\langle[\Lambda,Gu]\cdot\nabla
u^{n},\Lambda{u} \rangle|
\nonumber\\\!\!\!\!\!\!\!\!&&\leq{C}\|[\Lambda,Gu]\cdot\nabla
u\|_{L^{\frac{6}{3+2\theta_2}}}\|\Lambda{u}\|_{L^{\frac{6}{3-2\theta_2}}}
\nonumber\\\!\!\!\!\!\!\!\!&& \leq
C\left(\|\Lambda{G}u\|_{L^{p_1}}\|\Lambda
u\|_{L^{p_2}}+\|\Lambda{u}\|_{L^{p_3}}\|\Lambda{Gu}\|_{L^{p_4}}\right)\|\Lambda{u}\|_{L^{\frac{6}{3-2\theta_2}}}
.\nonumber
\end{eqnarray}
Here we choose
$$\frac{1}{p_1}=\frac{1}{p_4}=\frac{\delta_0+\theta_2}{3},~\frac{1}{p_2}=\frac{1}{p_3}=\frac{1}{2}-\frac{\delta_0}{3},$$
where
$\delta_0=\delta_0(\theta_2):=\{\frac{5}{2}-2\theta_1-\theta_2\}\vee0$.

 The assumptions $0<\theta_2\leq1$, $\theta_1\ge 0$ and $\theta_1+\theta_2 > \frac{5}{4}$
imply that $\delta_0\in[0,\theta_2)$,
${p_1},p_4\in(\frac{3}{2},\infty)$ and
 $p_2,p_3\in[2,6)$. By Lemmas 2.1 and 2.3, we have
$$\|\Lambda{Gu}\|_{L^{p_1}}\leq{C}\|Gu\|_{\frac{5}{2}-\theta_2-\delta_0}\leq{C}\|u\|_{L^2},$$
$$\|\Lambda{u}\|_{L^{p_2}}\leq C\|u\|_{\delta_0+1},$$
$$\|\Lambda{u}\|_{L^{\frac{6}{3-2\theta_2}}}\leq C\|u\|_{\theta_2+1}.$$
From the above estimates, we thus obtain
\begin{eqnarray}
|\langle\Lambda{B(u)},\Lambda{u}\rangle|
\leq{C}\|u\|_{L^2}\|u\|_{\delta_0+1}\|u\|_{\theta_2+1}.\nonumber
\end{eqnarray}

 For the case $\theta_2>1$, the Lemma 2.2 would suffice our purpose.
Actually, since $2\theta_1+\delta_0+\theta_2\geq\frac{5}{2}$,
applying Lemma 2.2 (with
$m_1=2\theta_1,m_2=\delta_0,m_3=\theta_2-1$), it leads to
\begin{eqnarray}
|\langle\Lambda{B(u)},\Lambda{u}\rangle|
\leq{C}\|u\|_{L^2}\|u\|_{\delta_0+1}\|u\|_{\theta_2+1}.\nonumber
\end{eqnarray}

 Therefore, by the Young's inequality and interpolation inequality, one may conclude that
\begin{eqnarray}
\left|p\|u\|^{p-2}\langle \Lambda{B(u)},\Lambda{u}\rangle\right|
\!\!\!\!\!\!\!\!\!&&\leq{C}\|u\|^{p-2}\|u\|_{L^2}\|u\|_{\delta_0+1}\|u\|_{\theta_2+1}
\nonumber\\\!\!\!\!\!\!\!\!&&\leq{C}\|u\|^{p-2}\|u\|_{L^2}^{\frac{2\theta_2+1-\delta_0}{\theta_2+1}}\|u\|_{\theta_2+1}^{\frac{\theta_2+2+\delta_0}{\theta_2+1}}
\nonumber\\\!\!\!\!\!\!\!\!&&\leq\varepsilon\|u\|^{p-2}\|u\|_{\theta_2+1}^{2}+C_{\varepsilon}\|u\|_{L^2}^{2m}\|u\|^{p-2}
\nonumber\\\!\!\!\!\!\!\!\!&&\leq\varepsilon\|u\|^{p-2}\|u\|_{\theta_2+1}^{2}+C\|u\|_{L^2}^{mp}+C\|u\|^{p}.\label{6.13}
\end{eqnarray}
By the H\"{o}lder's inequality and {Hypothesis} (4.2), we also get
\begin{eqnarray}
&&\frac{p}{2}\|u\|^{p-2}\|\Lambda{g}(u)\|_{L_2(U,\mathbb{H}^0)}^2+\frac{p(p-2)}{2}\|u\|^{p-4}\big\|(\Lambda{g}(u))^*\Lambda{u}\big\|_{L^2}^{2}\nonumber\\
\leq\!\!\!\!\!\!\!\!&&C\|u\|^{p-2}\|\Lambda{g}(u)\|_{L_2(U,\mathbb{H}^0)}^2\nonumber\\
\leq\!\!\!\!\!\!\!\!&&C(1+\|u\|^{p}).\label{6.14}
\end{eqnarray}
From \eref{6.11}-\eref{6.14}, we obtain
\begin{eqnarray}
&&\|u(t)\|^p+\int_0^t\|u^{n}(s)\|^{p-2}\|u(s)\|_{\theta_2+1}^{2}ds\nonumber\\
\leq\!\!\!\!\!\!\!\!&&\|u_0\|^p+CT+C\int_0^t\|u(s)\|^pds+C\int_0^t\|u(s)\|_{L^2}^{mp}ds
\nonumber\\
\!\!\!\!\!\!\!\!&&+\int_0^tp\|u(s)\|^{p-2}\langle\Lambda{g}(u(s))d{W}(s),\Lambda{u}(s)\rangle.\label{6.8}
\end{eqnarray}
For any $R_1>0$, define the stopping time
$$\tau_{R_1}:=\inf\{t\in[0,T]:\|u(t)\|>R_1\}\wedge{T}.$$
As a result of Theorem 4.1, we have
$$\lim_{R_1\rightarrow\infty}\tau_{R_1}=T,~~~~{\mathbb{P}}-a.s.$$
By the BDG's inequality we have
\begin{eqnarray}
&&{\mathbb{E}}\sup_{r\in[0,t\wedge\tau_{R_1}]}\left|\int_0^rp\|u(s)\|^{p-2}\langle\Lambda{g}(u(s))d{W}(s),{\Lambda}u(s)\rangle\right|
\nonumber\\
\leq\!\!\!\!\!\!\!\!&&3{\mathbb{E}}\left(\int_0^{t\wedge\tau_{R_1}}p\|u(s)\|^{2p-2}\|\Lambda{g}(u(s))\|_{L_2(U,\mathbb{H}^0)}^2ds
\right)^{1/2}\nonumber\\
\leq\!\!\!\!\!\!\!\!&&3{\mathbb{E}}\left(\sup_{s\in[0,t\wedge\tau_{R_1}]}\|u(s)\|^{2p-2}C\int_0^{t\wedge\tau_{R_1}}(1+\|u(s)\|^2)ds
\right)^{1/2}\nonumber\\
\leq\!\!\!\!\!\!\!\!&&3{\mathbb{E}}\left[\varepsilon\sup_{s\in[0,t\wedge\tau_{R_1}]}\|u(s)\|^p+C_{\varepsilon}
\left(\int_0^{t\wedge\tau_{R_1}}(1+\|u(s)\|^2)ds\right)^{p/2}\right]\nonumber\\
\leq\!\!\!\!\!\!\!\!&&3\varepsilon
{\mathbb{E}}\sup_{s\in[0,t\wedge\tau_{R_1}]}\|u(s)\|^p+3(2T)^{p/2-1}C_{\varepsilon}
{\mathbb{E}}\int_0^{t\wedge\tau_{R_1}}(1+\|u(s)\|^p)ds,\label{6.12}
\end{eqnarray}
where $\varepsilon>0$ is a small constant and $C_{\varepsilon}$
comes from the Young's inequality.

$\eref{5.6a},~ \eref{6.8}$ and \eref{6.12} yield that, for
$t\in[0,T]$,
\begin{eqnarray}
&&\mathbb{E}\sup_{0\leq t\leq\tau_{R_1}}\|u(t)\|^p+{\mathbb{E}}\int_0^{\tau_{R_1}}\|u(t)\|^{p-2}\|u(t)\|_{\theta_2+1}^{2}dt\nonumber\\
\leq\!\!\!\!\!\!\!\!&&{\mathbb{E}}\|u_0\|^p+C{\mathbb{E}}\|u_0\|_{L^2}^{mp}+CT+C{\mathbb{E}}
\int_0^{\tau_{R_1}}\|u(t)\|^pdt\nonumber\\
\leq\!\!\!\!\!\!\!\!&&
C(1+\mathbb{E}\|u_0\|^p+{\mathbb{E}}\|u_0\|_{L^2}^{mp})+C{\mathbb{E}}\int_0^{\tau_{R_1}}\|u(t)\|^pdt,\nonumber
\end{eqnarray}
where $C=C_{T,P}$ is a constant independent of $R_1$.

Thus, \eref{6.10} follows from the classic Gronwall's lemma and the
monotone convergence theorem.\hspace{\fill}$\Box$
\end{proof}
\section{Appendix}
\setcounter{equation}{0}
 \setcounter{definition}{0}

We recall the following stochastic Gronwall's lemma (cf. \cite[Lemma
5.3]{GZ1}) which is used in the proof of our results.
\begin{lemma}
Fix $T>0$. Assume that
$X,Y,Z,R:[0,T)\times\Omega\rightarrow\mathbb{R}$ are real-valued,
non-negative stochastic process. Let $\tau<T$ be a stopping time
such that $$\mathbb{E}\int_0^\tau(RX+Z)ds<\infty.$$ Assume,
moreover, that for some fixed constant $\kappa$,
$$\int_0^\tau{R}ds<\kappa,~~~\text{a.s.}$$
Suppose that for all stopping times $0\leq\tau_a<\tau_b\leq\tau$
\begin{eqnarray}{\mathbb{E}}\left(\sup_{t\in[\tau_a,\tau_b]}X+\int_{\tau_a}^{\tau_b}Yds\right)\leq c_0{\mathbb{E}}\left(X(\tau_a)+\int_{\tau_a}^{\tau_b}(RX+Z)ds\right),\label{a.1}
\end{eqnarray}
where $c_0$ is a constant independent of the choice of
$\tau_a,\tau_b$. Then
\begin{eqnarray}{\mathbb{E}}\left(\sup_{t\in[0,\tau]}X+\int_{0}^{\tau}Yds\right)\leq c{\mathbb{E}}\left(X(0)+\int_{0}^{\tau}Zds\right),\label{a.2}
\end{eqnarray}
where $c=c_{c_o,T,\kappa}$.
\end{lemma}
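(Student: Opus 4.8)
The plan is to upgrade the one-step local bound \eref{a.1} to the global estimate \eref{a.2} by partitioning $[0,\tau]$ into finitely many random subintervals on which the accumulated weight $\int R\,ds$ is small enough that the dangerous term $c_0\int RX\,ds$ on the right of \eref{a.1} can be absorbed into the supremum on the left. The uniform a.s. bound $\int_0^\tau R\,ds<\kappa$ is precisely what makes such a finite deterministic partition possible.

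First I would fix an integer $M>2c_0\kappa$ and define stopping times $\sigma_0:=0$ and, for $k\ge1$,
$$\sigma_k:=\inf\Big\{t\ge\sigma_{k-1}:\int_{\sigma_{k-1}}^t R\,ds\ge \frac{\kappa}{M}\Big\}\wedge\tau.$$
Since $\int_0^\tau R\,ds<\kappa$ almost surely, these are well defined, increasing, adapted, and satisfy $\sigma_M=\tau$ a.s., with the pathwise bound $\int_{\sigma_{k-1}}^{\sigma_k}R\,ds\le\kappa/M$ on each block. Applying \eref{a.1} with $\tau_a=\sigma_{k-1}$, $\tau_b=\sigma_k$ and estimating $\int_{\sigma_{k-1}}^{\sigma_k}RX\,ds\le(\kappa/M)\sup_{[\sigma_{k-1},\sigma_k]}X$, the choice $c_0\kappa/M<\frac{1}{2}$ lets me move half of the supremum to the left-hand side, giving
$$\mathbb{E}\Big(\sup_{[\sigma_{k-1},\sigma_k]}X+\int_{\sigma_{k-1}}^{\sigma_k}Y\,ds\Big)\le 2c_0\,\mathbb{E}\Big(X(\sigma_{k-1})+\int_{\sigma_{k-1}}^{\sigma_k}Z\,ds\Big).$$

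In particular $\mathbb{E}X(\sigma_k)\le 2c_0\,\mathbb{E}X(\sigma_{k-1})+2c_0\,\mathbb{E}\int_{\sigma_{k-1}}^{\sigma_k}Z\,ds$, a discrete Gronwall recursion that bounds each $\mathbb{E}X(\sigma_k)$ in terms of $\mathbb{E}X(0)$ and $\mathbb{E}\int_0^\tau Z\,ds$ times a factor controlled by $(2c_0)^M$. Summing the block estimates over $k=1,\dots,M$ and using $\sup_{[0,\tau]}X\le\sum_{k=1}^M\sup_{[\sigma_{k-1},\sigma_k]}X$ together with these bounds on $\mathbb{E}X(\sigma_{k-1})$ then yields \eref{a.2} with a constant $c$ depending only on $c_0$, $\kappa$ (through $M$) and $T$.

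The main obstacle I anticipate is not the iteration but justifying that the absorption is legitimate, i.e. that $\mathbb{E}\sup_{[\sigma_{k-1},\sigma_k]}X<\infty$ a priori, so that the half-supremum may be subtracted without manipulating an infinite quantity. I would secure this by first applying \eref{a.1} once on the whole interval $[0,\tau]$: the hypothesis $\mathbb{E}\int_0^\tau(RX+Z)\,ds<\infty$ (and the finiteness of $\mathbb{E}X(0)$ it entails) makes the right-hand side finite, whence $\mathbb{E}\sup_{[0,\tau]}X<\infty$ and every block supremum is automatically integrable. The only remaining care is the measurability and adaptedness of the $\sigma_k$ and the observation that $M$ may be chosen deterministically, which is exactly where the uniform bound $\int_0^\tau R\,ds<\kappa$ enters.
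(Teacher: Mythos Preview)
The paper does not give its own proof of this lemma; it is merely quoted from \cite[Lemma~5.3]{GZ1} in the appendix. Your argument is correct and is essentially the standard proof (and indeed the one in \cite{GZ1}): partition $[0,\tau]$ by the stopping times $\sigma_k$ at which $\int R\,ds$ accrues increments of size $\kappa/M$, use the a.s.\ bound $\int_0^\tau R\,ds<\kappa$ to guarantee a \emph{deterministic} number $M$ of blocks, absorb $c_0\int_{\sigma_{k-1}}^{\sigma_k}RX\,ds\le (c_0\kappa/M)\sup X$ into the left-hand side, and iterate the resulting discrete recursion. Your preliminary application of \eref{a.1} on all of $[0,\tau]$ to secure $\mathbb{E}\sup_{[0,\tau]}X<\infty$ (so that the absorption is legitimate) is the right way to close the loop; note that if $\mathbb{E}X(0)=\infty$ the conclusion \eref{a.2} is vacuous, so one may assume it finite. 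Two cosmetic points: the hypothesis in \eref{a.1} is stated for $\tau_a<\tau_b$, so when $\sigma_{k-1}=\sigma_k$ (which can happen once $\sigma_{k-1}=\tau$) the block estimate is trivial rather than an instance of \eref{a.1}; and one may as well take $c_0\ge1$ without loss of generality, which makes the discrete iteration cleaner.
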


 \subsection*{Acknowledgements}
 The authors would like to thank Dr. Zhuan Ye for some helpful
 discussions.


\begin{thebibliography}{2}
\bibitem{Al} D. Aldous, \emph{Stopping times and tightness}, Ann. Probab. \textbf{6} (1978), 335--340.
\bibitem{A}H. Ali, \emph{On a critical Leray-$\alpha$ model of turbulence}, Nonlinear Anal. Real World Appl. \textbf{14} (2013), 1563--1584.
\bibitem{BBF}D. Barbato, H. Bessaih, B. Ferrario, \emph{On a stochastic Leray-$\alpha$ model of Euler equations}, Stochastic Process. Appl. \textbf{124} (2014), 199--219.
\bibitem{BMR}D. Barbato, F. Morandin, M. Romito, \emph{Global regularity for a slightly supercritical hyperdissipative Navier-Stokes system}, Anal. PDE \textbf{7} (2014), 2009--2027.
\bibitem{BF}H. Bessaih, B. Ferrario, \emph{The regularized 3D Boussinesq equations with fractional Laplacian and no diffusion}, J. Differential Equations \textbf{262} (2017), 1822--1849.
\bibitem{BHR} H. Bessaih, E. Hausenblas, P.A. Razafimandimby, \emph{Strong solutions to stochastic hydrodynamical systems with multiplicative noise of jump type}, Nonlinear Differ. Equ. Appl. \textbf{ 22} (2015), 1661--1697.
\bibitem{BM}H. Bessaih, A. Millet, \emph{On stochastic modified 3D Navier-Stokes equations with anisotropic viscosity}, J. Math. Anal. Appl. \textbf{462} (2018), 915--956.
\bibitem{BR}H. Bessaih,  P.A. Razafimandimby, \emph{On the rate of convergence of the 2-D stochastic Leray-$\alpha$ model to the 2-D stochastic Navier--Stokes equations with multiplicative noise}, Appl. Math. Optim. \textbf{74 }(2016), 1--25.
\bibitem{BD} Z. Brz\'{e}zniak, L. Debbi, \emph{On stochastic Burgers equation driven by a fractional Laplacian and space-time white noise}, Stochastic differential equations: theorem and applications, in: Interdiscip Math. Sci., vol. 2, World Sci. Publ., Hackensack, NJ, (2007), 135--167.
\bibitem{CSS} L. Caffarelli, S. Salsa, L. Silvestre, \emph{Regularity estimates for the solution and the free boundary of the obstacle problem for the fractional Laplacian}, Invent. math. \textbf{171} (2008), 425--461 .
\bibitem{CV} L. Caffarelli, A. Vasseur, \emph{Drift diffusion equations with fractional diffusion and the quasi-geostrophic equation}, Ann. of Math. \textbf{171} (3) (2010), 1903--1930.
\bibitem{CGS} Y. Chen, H. Gao, C. Sun, \emph{The stochastic fractional power dissipative equations in any dimension and applications}, J. Math. Anal. Appl. \textbf{425} (2015), 1240--1256.
\bibitem{CTV} V.V. Chepyzhov, E.S. Titi, M.I. Vishik, \emph{On the convergence of solutions of the Leray-$\alpha$ model to the trajectory attractor of the 3D Navier-Stokes system}, Discrete Contin. Dyn. Syst. \textbf{17} (2007), 481--500.
\bibitem{CHOT} A. Cheskidov, D.D. Holm, E. Olson, E.S. Titi, \emph{On a Leray-$\alpha$ model of turbulence}, Proceedings of the Royal Society A \textbf{461} (2005), 629--649.
\bibitem{CK} I. Chueshov, S. Kuksin, \emph{Stochastic 3D Navier-Stokes equations in a thin domain and its $\alpha$-approximation}, Phys. D \textbf{237 }(2008), 1352--1367.
\bibitem{CM} I. Chueshov, A. Millet, \emph{Stochastic 2D hydrodynamical type systems: well posedness and large deviations}, Appl. Math. Optim. \textbf{61} (2010), 379--420.
\bibitem{CGV} P. Constantin, N. Glatt-Holtz, V. Vicol, \emph{Unique ergodicity for fractionally dissipated stochastically forced 2D Euler equations}, Commmun. Math. Phys. \textbf{330} (2014), 819--857.
\bibitem{DZ} G. Da Prato, J. Zabczyk, \emph{Stochastic equations in infinite dimensions. Second edition. Encyclopedia of Mathematics and its Applications}, \textbf{152} Cambridge University Press, Cambridge, 2014.
\bibitem{Debbi} L. Debbi, \emph{Well-posedness of the multidimensional fractional stochastic Navier-Stokes equations on the torus and on bounded domains}, J. Math. Fluid Mech. \textbf{18} (2016), 25--69.
\bibitem{DGT} A. Debussche, N. Glatt-Holtz, R. Temam, \emph{Local martingale and pathwise solutions for an abstract fluids model}, Phys. D \textbf{240} (2011), 1123--1144.
\bibitem{DGTZ}A. Debussche, N. Glatt-Holtz, R. Temam, M. Ziane, \emph{Global existence and regularity for the 3D stochastic primitive equations of the ocean and atmosphere with multiplicative white noise}, Nonlinearity \textbf{25} (2012), 2093--2118.
\bibitem{DS} G. Deugou\'{e}, M. Sango, \emph{On the strong solution for the 3D stochastic Leray-alpha model}, Boundary Value Problems vol. (2010), 1--32.
\bibitem{FHR}P.W. Fernando, E. Hausenblas, P.A. Razafimandimby, \emph{Irreducibility and exponential mixing of some stochastic hydrodynamical systems driven by pure jump noise}, Comm. Math. Phys. \textbf{348} (2016), 535--565.
\bibitem{F} B. Ferrario, \emph{Characterization of the law for 3D stochastic hyperviscous fluids}, Electron. J. Probab. \textbf{21} (2016), 1--22.
\bibitem{F1} F. Flandoli, \emph{A stochastic view over the open problem of well-posedness for the 3D Navier-Stokes equations. Stochastic analysis: a series of lectures}, Progr. Probab., 68, Birkh\"{a}user/Springer, Basel, (2015), 221--246.
\bibitem{FG} F. Flandoli, D. Gatarek, \emph{Martingale and stationary solutions for stochastic Navier-Stokes equations}, Probability Theory and Related Fields \textbf{102} (1995), 367--391.
\bibitem{GZ1} N. Glatt-Holtz, M. Ziane, \emph{Strong pathwise solutions of the stochastic Navier--Stokes system}, Adv. Differential Equations \textbf{14} (2009), 567--600.
\bibitem{GV} N. Glatt-Holtz, V. Vicol, \emph{Local and global existence of smooth solutions for the stochastic Euler equations with multiplicative noise}, Ann. Probab. \textbf{42} (2014), 80--145.
\bibitem{GZ2}B. Guo, G. Zhou, \emph{Ergodicity of the stochastic fractional reaction-diffusion equation}, Nonlinear Anal. \textbf{109} (2014), 1--22.
\bibitem{HSL} J. Huang, T. Shen, Y. Li, \emph{Dynamics of stochastic fractional Boussinesq equations}, Discrete Contin. Dyn. Syst. Ser. B \textbf{20} (2015), 2051--2067.
\bibitem{KP} T. Kato, G. Ponce, \emph{Commutator estimates and the Euler and Navier-Stokes equations}, Comm. Pure Appl. Math. \textbf{41} (1988), 891--907.
\bibitem{K}T.G. Kurtz, \emph{The Yamada-Watanabe-Engelbert theorem for general stochastic equations and inequalities}, Electron. J. Probab. \textbf{12} (2007), 951--965.
\bibitem{L1} J. Leray, \emph{Sur le mouvement d'un liquide visqueux emplissant l'espace}, (French) Acta Math. \textbf{63} (1934), 193--248.
\bibitem{L2}J.-L. Lions, \emph{Quelques m\'{e}thodes de r\'{e}solution des probl\`{e}mes aux limites non lin\'{e}aires}. Dunod; Gauthier-Villars, Paris 1969.
\bibitem{Liu1} W. Liu, \emph{Existence and uniqueness of solutions to nonlinear evolution equations with locally monotone operators}, Nonlinear Anal. \textbf{74} (2011), 7543--7561.
\bibitem{Liu2} W. Liu, \emph{Well-posedness of stochastic partial differential equations with Lyapunov condition}, J. Differential Equations \textbf{255} (2013), 572--592.
\bibitem{LR1} W. Liu, M. R\"{o}ckner, \emph{SPDE in Hilbert space with locally monotone coefficients}, J. Funct. Anal. \textbf{259} (2010), 2902--2922.
\bibitem{LR2} W. Liu, M. R\"{o}ckner, \emph{Local and global well-posedness of SPDE with generalized coercivity conditions}, J. Differential Equations \textbf{254} (2013), 725--755.
\bibitem{LR3} W. Liu, M. R\"{o}ckner, \emph{Stochastic Partial Differential Equations: An Introduction}, Universitext, Springer, 2015.
\bibitem{LR4} W. Liu,  M. R\"{o}ckner, J. L. da Silva, \emph{Quasi-Linear (Stochastic) Partial Differential Equations with Time-Fractional Derivatives}, SIAM J. Math. Anal.
 \textbf{50} (2018), 2588--2607.
\bibitem {OT} E. Olson, E.S. Titi, \emph{Viscosity versus vorticity stretching: global well-posedness for a family of Navier-Stokes-alpha-like models}, Nonlinear Anal. \textbf{6} (2007), 2427--2458.
\bibitem{P} N. Pennington, \emph{Global solutions to the generalized Leray-alpha equation with mixed dissipation terms}, Nonlinear Anal. \textbf{136} (2016), 102--116.
\bibitem{RZ} M. R\"{o}ckner, X. Zhang, \emph{Stochastic tamed 3D Navier-Stokes equations: existence, uniqueness and ergodicity}, Probab. Theory Related Fields \textbf{145} (2009), 211--267.
\bibitem{RZZ1} M. R\"{o}ckner, R.-C. Zhu, X.-C. Zhu, \emph{Local existence and non-explosion of solutions for stochastic fractional partial differential equations driven by multiplicative noise}, Stochastic Process. Appl. \textbf{124} (2014), 1974--2002.
\bibitem{RZZ2} M. R\"{o}ckner, R.-C. Zhu, X.-C. Zhu, \emph{Sub and supercritical stochastic quasi-geostrophic equation}, Ann. Probab. \textbf{43} (2015), 1202--1273.
\bibitem{RZZ3} M. R\"{o}ckner, R.-C. Zhu, X.-C. Zhu, \emph{Existence and uniqueness of solutions to stochastic functional differential equations in infinite dimensions}, Nonlinear Anal. \textbf{125} (2015), 358--397.
\bibitem{ES}E. Stein, \emph{Singular Integrals and Differentiability Properties of Functions}. Princeton, NJ: Princeton University Press, 1970.
\bibitem{T}T. Tao, \emph{Global regularity for a logarithmically supercritical hyperdissipative Navier-Stokes equation}, Anal. PDE \textbf{2} (2009), 361--366.
\bibitem{RT}R. Temam, \emph{Navier--Stokes Equations. Theory and Numerical Analysis}, Studies in Mathematics and its Applications, vol.2, North-Holland Publishing Co., Amsterdam-New York, 1979.
\bibitem{WX} F.-Y. Wang, L. Xu, \emph{Derivative formula and applications for hyperdissipative stochastic Navier-Stokes/Burgers equations}, Infin. Dimens. Anal. Quantum Probab. Relat. Top. \textbf{15} (2012), 1250020, 19pp.
\bibitem{W}J. Wu, \emph{Generalized MHD equations}, J. Differential Equations \textbf{195} (2003), 284--312.
\bibitem{Y}K. Yamazaki, \emph{On the global regularity of generalized Leray-alpha type models}, Nonlinear Anal. \textbf{75} (2012), 503--515.
\bibitem{YZ}Z. Ye, X. Xu, \emph{Global well-posedness of the 2D Boussinesq equations with fractional Laplacian dissipation}, J. Differential Equations \textbf{260} (2016), 6716--6744.
\bibitem{Z}X. Zhang, \emph{Stochastic Lagrangian particle approach to fractal Navier-Stokes equations}, Commun. Math. Phys. \textbf{311} (2012), 133--155.





\end{thebibliography}
\end{document}